\def\C{\Bbb C}
\def\D{\Delta}
\def\k{\kappa}
\def\vp{\varphi}
\newcommand\DD{{\mathbb D}}
\newcommand\BB{{\mathbb B}}
\def\im{\operatorname{Im}}
\def\re{\operatorname{Re}}
\def\diam{\operatorname{diam}}
\def\dist{\operatorname{dist}}
\newtheorem{theorem}{Theorem}
\newtheorem{lemma}[theorem]{Lemma}
\newtheorem{definition}[theorem]{Definition}
\newtheorem{proposition}[theorem]{Proposition}
\newtheorem{corollary}[theorem]{Corollary}
\theoremstyle{remark}\newtheorem{remark}[theorem]{Remark}
\title[Gehring-Hayman for strongly pseudoconvex domains]{A Gehring-Hayman inequality for~strongly pseudoconvex domains}
\author[\L ukasz Kosi\'nski]{\L ukasz Kosi\'nski}
\address{\L. Kosi\'nski\\ Institute of Mathematics, Faculty of Mathematics and Computer Science, Jagiellonian University, \L ojasiewicza 6, 30-348 Krak\'ow, Poland} \email{lukasz.kosinski@uj.edu.pl}
\author{Nikolai Nikolov} 
\address{N. Nikolov\\Institute of Mathematics and Informatics\\Bulgarian Academy	of Sciences\\Acad. G. Bonchev 8, 1113 Sofia, Bulgaria \vspace{1mm}	\newline Faculty of Information Sciences\\	State University of Library Studies and Information Technologies\\Shipchenski prohod 69A, 1574 Sofia, Bulgaria}
\email{nik@math.bas.bg}
\author{Pascal J. Thomas}
\address{P.J.~Thomas\\
Institut de Math\'ematiques de Toulouse; UMR5219 \\
Universit\'e de Toulouse; CNRS \\
UPS, F-31062 Toulouse Cedex 9, France} 
\email{pascal.thomas@math.univ-toulouse.fr}
\thanks{The first named author was supported by the NCN grant SONATA BIS no. 2017/26/E/ST1/00723. The second named author was partially supported by the Bulgarian National Science Fund, Ministry of Education and Science of Bulgaria under contract KP-06-N52/3. The second and third author enjoy the 
support of the PHC Rila program 48135TJ, which made possible the stay at
the Institute for Mathematics and Informatics of the Bulgarian Academy of Sciences,
Sofia, during which this work was started.}
\keywords{Kobayashi hyperbolic spaces; visibility; strongly pseudoconvex domains; geodesics}
\subjclass[2010]{32F45}
\begin{document}
	
	
\begin{abstract}
We prove that if $D$ is a strongly pseudoconvex domain with $\mathcal C^{2, \alpha}$-smooth boundary, then the length of a geodesic for the Kobayashi-Royden infinitesimal metric between two points is bounded by a constant multiple of the Euclidean distance between the points.
\end{abstract}

	\maketitle

\section{Introduction and motivation}

In 1962, F.W. Gehring and W.K. Hayman \cite{GH} proved a conjecture of G. Piranian: 
there is an absolute constant $C$ such that for   any
simply connected domain $U$ strictly included in the complex plane, for any $z_1, z_2 \in U$,
if $\gamma_H$ is the hyperbolic geodesic from $z_1$ to $z_2$, and $\gamma$ any
rectifiable curve from $z_1$ to $z_2$ contained in $U$, then
\[
l\left( \gamma_H \right) \le C l\left( \gamma\right) ,
\]
where $l$ denotes Euclidean length. When $U$ is convex, this just means that
$l\left( \gamma_H \right) \le C |z-w|$. There are also versions of this 
result for the diameters of the curves and for other metrics, see for instance the survey \cite{PS}.

In the several complex variable settings, it is natural to ask similar questions
about geodesics for the Kobayashi-Royden metric
$$\k_D(z;X)=\inf\{|\alpha|:\exists\vp\in\mathcal O(\D,D), \vp(0)=z, \alpha\vp'(0)=X\},$$
 and it seems
realistic to restrict oneself to domains with some regularity. 
The best bounds known so far were of the form $C|z-w|^{1/m}$ for 
$m$-convex domains and strongly pseudoconvex domains, see \cite{LWZ}, \cite{LPW} and \cite{NO}.
However, in the case of the unit ball, it is easy to see that geodesics,
since they lie inside affine complex lines, must have length bounded by 
$\frac\pi2|z-w|$, which is much smaller in the critical case when $z$ and $w$
are close to each other.  The goal of this note is to show that this type of bound
still holds in regular enough strongly pseudoconvex domains.

	\begin{theorem}\label{thm:main}
Let $D$ be a strongly pseudoconvex domain with $\mathcal C^{2, \alpha}$-smooth boundary. Then there is $C>0$ such that $$|z-w|\leq l(\gamma_{z,w}) \leq C |z-w|$$
for any $z,w\in D$ and any Kobayashi geodesic $\gamma_{z,w}$ joining $z$ and $w$.
\end{theorem}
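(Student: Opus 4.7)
The lower bound $|z-w|\leq l(\gamma_{z,w})$ is immediate from the fact that $\gamma_{z,w}$ is a rectifiable curve joining $z$ and $w$. For the upper bound, my plan is a three-step reduction: first reducing to the regime where both points are close to a common boundary point, then comparing the geodesic to one in a local model, and finally passing from a Kobayashi-level estimate to a Euclidean-length bound.

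If $z,w$ both lie in a fixed compact subset $K\Subset D$, then $\k_D$ is bi-Lipschitz equivalent to the Euclidean norm on a neighborhood of $K$, and standard $k_D$-estimates confine $\gamma$ to a slightly enlarged compact set; the desired bound follows immediately. When $|z-w|$ is bounded below by some fixed $\eps_0 > 0$ but at least one of $z, w$ is near $\partial D$, the Gromov hyperbolicity of Balogh-Bonk and the associated visibility property available for strongly pseudoconvex $\mathcal{C}^{2,\alpha}$ domains force $\gamma$ to enter a uniform compact set $K(\eps_0)\Subset D$, so the estimate reduces to bounding two geodesic segments each ending on $\partial K(\eps_0)$. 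Hence the genuine difficulty is the regime where $|z-w|$ is small and both points lie in a small neighborhood of a common boundary point $p_0\in\partial D$.

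In this regime, I would use holomorphic coordinates near $p_0$ in which $D$ is defined by $\rho(q)=-\re q_1 + H(q',\bar q') + O(|q|^{2+\alpha})<0$, with $H$ the positive-definite Levi form. The homogeneous model $M=\{-\re q_1 + H(q',\bar q')<0\}$ is biholomorphic to the unit ball; in the ball, Kobayashi geodesics lie on affine complex lines and obey the sharp Euclidean bound $l_E\leq\frac{\pi}{2}|z-w|$. The Royden-Sibony localization of the Kobayashi metric together with the $\mathcal{C}^{2,\alpha}$-closeness of $\partial D$ to $\partial M$ yields a uniform comparison $\k_D\asymp\k_M$ on a small neighborhood of $p_0$, with multiplicative constants tending to $1$ as the neighborhood shrinks. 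Consequently, our geodesic $\gamma$ becomes a $(1+\eps,O(1))$-quasi-geodesic for $k_M$.

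The main obstacle is to convert this Kobayashi quasi-geodesic statement into a Euclidean length bound. The plan is to exploit the explicit ``ellipsoidal pancake'' shape of Kobayashi balls: a ball $B_{k_M}(p,R)$ at a near-boundary point $p$ of $M$ is, up to uniform constants, an ellipsoid with semi-axes proportional to $\delta(p)$ in the complex-normal direction and $\sqrt{\delta(p)}$ in the complex-tangential directions. By Gromov hyperbolicity, $\gamma$ stays within a uniform $k_M$-tubular neighborhood of the true model geodesic, which in Euclidean terms is an ellipsoidal tube whose width is controlled by $\sqrt{\delta(\gamma(t))}$. Combining this with the exponential decay of $\delta(\gamma(t))$ away from its maximum (forced by the lower bound $\k_D(p;X)\gtrsim |X_N|/\delta(p)$) and with the $\frac{\pi}{2}|z-w|$ bound for the true model geodesic, a covering of $\gamma$ by unit-$k_M$-radius balls and summation of their Euclidean diameters yields $l_E(\gamma)\leq C|z-w|$ in $M$, which transfers back to $D$ with bounded multiplicative error. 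Making the ellipsoidal-tube and decay estimates sufficiently quantitative to survive this transfer is the delicate point.
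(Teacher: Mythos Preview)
Your reduction to points close to a common boundary point is correct and parallels the paper. The substantive gap is in the last paragraph: the Morse-lemma tubular-neighborhood argument does not produce a Euclidean length bound, and the ``sum of diameters of covering Kobayashi balls'' step is simply false as a bound for $l(\gamma)$. Take in $\BB_n$ the points $z=(1-\epsilon,0)$ and $w=(1-\epsilon/2,0)$, so $|z-w|\asymp\epsilon$. A unit $k_{\BB}$-ball at depth $\delta$ has Euclidean diameter $\asymp\sqrt\delta$ (the tangential width dominates), so your sum of diameters along the covering is $\asymp\sqrt\epsilon$, already of order $\epsilon^{-1/2}|z-w|$. More to the point, a curve that is a $(1+\epsilon,C)$-quasi-geodesic for $k_{\BB}$ between $z$ and $w$ is allowed to make an $O(1)$ number of tangential excursions of size $\sqrt{\delta(\gamma(t))}$ inside each such ball; this produces Euclidean length $\asymp\sqrt\epsilon$ while $|z-w|\asymp\epsilon$. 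So staying in a bounded Kobayashi tube of the model geodesic is compatible with $l(\gamma)/|z-w|\to\infty$. The exponential decay of $\delta(\gamma(t))$ controls the \emph{normal} component $|\gamma'_N|\lesssim\delta$, but it says nothing about tangential oscillation, and the tangential bound $|\gamma'_T|\lesssim\sqrt\delta$ is too weak to sum.

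This is exactly the difficulty the paper isolates. It splits into a \emph{tangential} case ($|(z-w)_z|\ll|z-w|$), where one shows the real geodesic is induced by a complex geodesic of small diameter and then uses $\mathcal C^{1,\alpha}$ compactness of complex geodesics under a global scaling to the ball; and a \emph{non-tangential} case, where a dichotomy (their Claim~0) shows that if the geodesic ever had two points with small normal-to-total ratio it would fall into the tangential case, so one may replace $l$ by the normal length $l_N$. After that reduction the normal estimate $\kappa_D(p;X)\gtrsim|X_N|/\delta(p)$ \emph{is} strong enough, and they close the argument by a precise scaling with the explicit ball automorphisms $A_t$ together with $\mathcal C^1$ convergence of geodesics (their Claims~1 and~2). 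The missing ingredient in your sketch is precisely a mechanism that rules out tangential oscillation of a $D$-geodesic; without it, passing through the model via quasi-geodesics cannot give better than the $|z-w|^{1/2}$-type bounds already in the literature.
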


Observe that $C$ here depends on the domain. For this result, it does not matter whether
we consider the lengths of all curves or simply the Euclidean distance: since the
domain is bounded, we may always assume that $|z-w|\le c_0$ for come $c_0>0$,
the inequality being trivial otherwise, and that $z,w$ are close to $\partial D$,
since when $z,w$ are contained in some compact set $K\subset\subset D$, 
the Kobayashi-Royden and Euclidean metrics are equivalent, and so
$ l(\gamma_{z,w})$ is comparable to $|z-w|$. And for two points close to each other
and to a $\mathcal C^2$ boundary, the lengths of curves connecting the points are
comparable to their Euclidean distance.

The optimal class of domains to which this result can apply remains an open question: can we
lower the regularity requirement? Can we allow finite type or even 
mere pseudoconvexity? Is that property even required? 
For really irregular domains,
do we have a bound involving the lengths
of connecting curves rather than the Euclidean distance?
Can we have a universal constant if the bound involves the lengths
of connecting curves?
Of course, geodesics
for the Kobayashi-Royden metric do not always exist, 
and the question would have to be rephrased using the existing notions of quasi-geodesics.

\section{A construction of Fridman-Ma and Lempert's scaling}\label{sec:FM}

Within this section, we shall outline the \emph{global scaling} process which provides much more precise estimates than standard methods used so far (e.g. the ones involving squeezing functions).

In his seminal paper \cite{L1}, Lempert showed 
that in a strongly 
convex domain $D$
with $\mathcal C^6$-boundaries any two points lie on a 
complex geodesic disc.
To show the existence of a geodesic he 
used the scaling method: for a point $p$ in the topological boundary $\partial D$ of a strongly pseudoconvex domain $D$ he found a neighborhood $U$ and a biholomorphic map $F$ on $U$ such that $F(\partial D\cap U)$ was close in $\mathcal C^6$-topology to a big part of the Euclidean sphere. The main weakness of the above construction is that it is not defined globally on $D$.

A similar construction was independently applied by Fridman and Ma \cite{FM} to the problem of exhaustion of smooth pseudoconvex domains at points of global strict convexity. 

We will construct a global scaling on strongly pseudoconvex domains. \emph{Scaling} here means that we use a precise automorphism of the Euclidean ball to scale while \emph{global} means that scaling maps are defined on the whole $\overline D$, so that images of geodesics under scaling maps are still geodesics, while an explicit form for automorphisms is essential in carrying out precise estimates.

We shall try to keep the notation from both papers: $\mathbb B$ denotes the unit Euclidean ball and $B$ denotes the ball of radius $1$ centered at $(-1,0)$, that is $B= \mathbb B- (1,0)$.

\bigskip

{\bf Fridman-Ma construction}. Let $D$ be a strongly pseudoconvex domain with $\mathcal C^{2,\alpha}$-smooth boundary, $\alpha\in [0,1]$. Note that for $\alpha=0$ this covers 
the $\mathcal C^2$ case. Thanks to \cite{DFW}, that any boundary point  $p\in \partial D$ can be exposed, that is to say, there is a  biholomorphism $\Phi$ on a neighborhood of $\overline D$ such that for $D'=\Phi(D)$, $p'=\Phi(p)$, $p'$ is a point of \emph{ global strong convexity}, that is, there is a ball $U$ such that $D\subset U$ and $p' \in \partial D \cap \partial U$. This will be the starting point for the Fridman-Ma construction  \cite{FM}, which we shall recall, but in addition we will keep track of the dependence of the transformations on the starting point $p$.


Let us start with the following definition.
\begin{definition}[See {\cite[Definition 3.1]{FM}}]
Let $N,k$ be natural numbers, $N\geq 4$, $k\leq  2(N-4)$. A $\mathcal C^2$ smooth domain $D\subset \mathbb C^n$, $0\in \partial D$ is of type $C(N,k)$ if there is a defining function $\rho$ of $D$ near $0$ of the form 
\begin{equation*}
\rho(z) = 2 \re z_1 + \re \sum_{i,j=2}^n a_{ij} z_i z_j + d |z_1|^2  + N|z'|^2 + o(|z|^{2+\alpha}),
\end{equation*}
where $z'=(z_2,\dots,z_n)$, the numbers $N, d$ and $a_{ij}$ satisfy $d>1$ and 
\begin{equation}\label{eq:aij}
\re \sum_{i,j=2}^n a_{ij} z_i z_j + (N-4 - k/2) |z'|^2 \geq 0 \quad \text{for } z'\in\mathbb C^{n-1}.
\end{equation}
\end{definition}
\begin{remark}\label{CNK}Note that \eqref{eq:aij} is equivalent to $|\sum_{i,j=2}^n a_{ij} z_i z_j| \leq (N-4 - k/2) |z'|^2$. In particular, if $k= 2(N-4)$, then $a_{ij}=0$.
\end{remark}

Since $p$ is a point of global strong convexity, the same is true of nearby points. The construction is outlined in five steps below.

\medskip

{\it Step 1}. The translation 
\begin{enumerate}
\item[$i)$] $z\mapsto T_p(z):=z-p$,
\end{enumerate} 
the scaling (locally independent of $p$) and the 
unitary transformation
\begin{enumerate}
\item[$ii)$] $z \mapsto A(p)z$,
\end{enumerate}
which can be chosen so that if we write $\Phi_1:=A(p)\circ T_p,$
$\Phi_1(D)\subset B$ and it admits
a defining function $\rho$ 
with expansion at 
$0$ of the form  
\begin{multline*}\rho(z) = 2\re z_1 + \re \sum_{i,j=1}^n a_{ij}(p)z_i z_j +\\ \re \sum_{j=1}^n c_j(p) z_1 \bar z_j + \sum_{j=2}^n N_j(p)|z_j|^2 +  + o(|z|^{2+(1-\epsilon)\alpha}),
\forall \epsilon >0,
\end{multline*}
where we also have used $A(p)$ to diagonalize the possible hermitian terms
$\sum_{i,j=2}^n c_{i,j} z_i\bar z_j$.
The unitary matrix depends on the first and second-order derivatives of the defining function of $D$ at $p$, so $a_{ij}$, $c_j$ and $N_j$ are functions of class $\mathcal C^\alpha$ of $p$. 

\medskip

{\it Step 2}. This is precisely \cite[Lemma 3.2]{FM}. Namely, there is an integer $N\geq 4$ such that $D$ is biholomorphic to a domain of the type $C(N,0)$.

This is achieved by Fridman and Ma using an affine map  
\begin{enumerate} 
\item[$iii)$] $\Phi_2(z_1, z_2, \dots, z_n)=
\left( z_1, z_2+\frac{c_2(p)}{2N_2(p)} z_1, \dots, 
z_n+\frac{c_n(p)}{2N_n(p)} z_1\right)$ 
\end{enumerate}
the coefficients of
which are clearly of class $\mathcal C^\alpha$, as are their inverses; rescaling maps $\Phi_3$
\begin{enumerate} 
\item[$iv)$] $z_j\mapsto t_j(p) z_j$,
\end{enumerate}
chosen so that $\Phi_3 \circ \Phi_2 \circ \Phi_1 (D)\subset B$;
 the $t_j$ are of class $\mathcal C^\alpha$ with respect to $p$. Then, as in \cite[(3.7), p. 391]{FM} $\Phi_3 \circ \Phi_2 \circ \Phi_1 (D)$ admits a defining function of the form
\begin{multline}
\label{rho37}
  \rho(z) = 2\re z_1 + 2  \re \sum_{j=1}^n b_{j}(p)z_1 z_j 
+\\ 
\re \sum_{i,j=1}^n a_{ij}(p)z_i z_j + d|z_1|^2
+M |z'|^2+  + o(|z|^{2+(1-\epsilon)\alpha}),
\forall \epsilon >0.  
\end{multline}
We then apply an automorphism $\Phi_4=\Phi_4^\epsilon$ of $B$ of the form
\begin{enumerate}
\item[$v)$] $\Phi_4^\epsilon: z\mapsto \left( \frac{\epsilon z_1}{ 2- \epsilon + (1-\epsilon) z_1}, \frac{\sqrt{\epsilon (2-\epsilon)}}{2- \epsilon + (1-\epsilon) z_1} z'\right).$
\end{enumerate}
where $\epsilon$ is such that $\epsilon \sum_j |b_j(p)|$ is smaller than a given positive  number $\lambda$ chosen as in 
\cite[Lemma 3.1, p. 390]{FM}
($\lambda=1/8$ will do). In particular, since 
the $b_j$ are continuous w.r.t to $p$, $\epsilon$ can be chosen locally independently of $p$. The resulting domain
$\Phi_4 \circ \cdots \Phi_1(D)$ admits a defining function of the 
form \eqref{rho37} with new coefficients $b'_j$ satisfying the
hypotheses of \cite[Lemma 3.1]{FM} and so applying a map
$\Phi_5(z)=(z_1+\sum_{j=1}^n b'_j z_1 z_j,z')$, then
a contraction $\Phi_6(z)= \frac{M}N z$ with $N$ an integer,
$N>8M$ we obtain that $\Phi_6 \circ \cdots \Phi_1(D)$
is of type $C(N,0)$ \cite[end of proof of Lemma 3.2, p. 392]{FM}.
Again, it is straightforward to see that $N$ can be chosen locally constant.

\medskip

{\it Step 3}. The goal is to show that any domain of type $C(N,k)$ is biholomorphic to $C(N,k+1)$. This is relevant
only when $N\geq 5$ and $k\leq 2(N-4)-1$. Then the statement is precisely \cite[Lemma 3.4]{FM}. 
The transformations used to achieve this statement are:
\begin{enumerate}
\item[$vi)$] $z\mapsto  (z_1 + \sum_{i.j=2}^n b_{ij}(p) z_i z_j, z')$,
\end{enumerate}
where $b_{ij}(p) = \frac{1}{4(N-4-k/2)} a_{ij}(p)$,
\begin{enumerate}
\item[$vii)$] an automorphism $\Phi_4^\epsilon$ of the ball $B$,
\end{enumerate}
where $\epsilon$ is chosen independently of $p$, and
\begin{enumerate}
\item[$viii)$] $z\mapsto \left( \frac{14}{15} z_1, \sqrt{\frac{14}{15}} z'\right)$.
Call $\Phi_7$ the composition of those maps.
\end{enumerate}

\medskip

{\it Step 4}. Repeating Step~3 exactly $2(N-4)$ times, 
with maps $\Phi_7,\dots,\Phi_{2N-2}$,
we get a domain whose defining function near $0$ is of the form
\begin{equation}\label{eq:rho1}\rho(z) = 2 \re z_1 + d(p) |z_1|^2 + N|z'|^2 +  + o(|z|^{2+(1-\epsilon)\alpha}),
\forall \epsilon >0.
\end{equation}

\medskip

{\it Step 5}. Applying a map 
\begin{enumerate}\item[$ix)$] 
$\Phi_{2N-1}(z)= (d(p)z_1, (d(p)N)^{1/2} z'),$
\end{enumerate}
we can assume that $d(p)=N=1$. Also, the coefficients are of class $\mathcal C^\alpha$. Thus $\Phi_{2N-1}\circ \cdots \Phi_1(D)$
which admits a defining function  of the form 
\begin{equation}\label{rho}\rho(z) =2 \re z_1 + |z|^2 +  + o(|z|^{2+(1-\epsilon)\alpha}),
\forall \epsilon >0,\quad \text{as } z\to 0,
\end{equation}
and that is contained in the ball $rB$ of radius $r$ centered at $(-r,0)$ for some $r\geq 1$.

\begin{definition}
\label{fmmap}
If $D$ is a $\mathcal C^2$ domain
and $p\in \partial D$ a point of strong pseudoconvexity,
we denote  $F_p := \Phi_{2N-1}\circ \cdots \Phi_1$ the map
which results from the Fridman-Ma construction. 
\end{definition}
\medskip

\begin{remark}\label{remhol}
It follows from the above construction that we can locally write $F_q(z)=\tilde F(z, \alpha_1(q),\ldots, \alpha_M(q))$, where $\tilde F$ is a holomorphic mapping and $\alpha_j$ are of class $\mathcal C^{\alpha}$ (here $M$ denotes the number of $\mathcal C^\alpha$ smooth functions that appear in the construction of $F_q$). The same remains true for $F_q^{-1}(z)$.

In particular, the Jacobian of $F_q$ with respect to $z$ as well as 
that of its inverse $F_q^{-1}$ vary locally $\mathcal C^\alpha$-continuously. In particular, $F_q$ locally does not change the Euclidean distance between points.
\end{remark}

For later purposes, we need to know that we can choose an appropriate $p$
to make the Fridman-Ma construction.

\begin{lemma}
\label{rem} For any $z\in D$ that is sufficiently close to $p_0\in \partial D$,
there exists $q\in \partial D$, close to $p_0$, so that $F_q(z)\in (-1,0)\times \{0\}^{n-1}.$ 
\end{lemma}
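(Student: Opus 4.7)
The plan is to exhibit the family of curves $F_q^{-1}\bigl((-1,0)\times\{0\}^{n-1}\bigr)$, parametrized by $q\in\partial D$ near $p_0$, as a covering of a one-sided Euclidean neighborhood of $p_0$ in $\overline D$; any $z$ close enough to $p_0$ then lies on one such curve, whose base point on $\partial D$ is the required $q$. Concretely, introduce
$$G(q,t) := F_q^{-1}(-t,0,\dots,0), \qquad (q,t)\in\partial D\times[0,t_0),$$
and study it near $(p_0,0)$; it suffices to show that $G$ covers a neighborhood of $p_0$ in $\overline D$.

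By Remark~\ref{remhol}, $G$ is jointly continuous, real-analytic in $t$ for each fixed $q$, and $G(q,0)=q$. Set $v(q):=\partial_t G(q,0)=-DF_q^{-1}(0)\cdot e_1$. The defining function of $F_q(D)$ near the origin has the form $2\re w_1+|w|^2+o(\ldots)$, so $(-1,0,\dots,0)$ is inward-pointing in $F_q(D)$ and transverse to $T_0\partial F_q(D)$; pulling back by the complex-linear isomorphism $DF_q^{-1}(0)$ shows that $v(q)$ points into $D$ and is transverse to $T_q\partial D$. The holomorphy of $F_q$ in the variable $z$, together with the continuous dependence of its Taylor coefficients on $q$ (again Remark~\ref{remhol}), furthermore furnishes the uniform tangent expansion
$$G(q,t) = q + t\,v(q) + R(q,t),\qquad \sup_{q\text{ near }p_0}|R(q,t)|\le C\,t^2.$$

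Now compare $G$ with the ``frozen'' model $\widetilde T_0(q,t):=q+t\,v(p_0)$. The derivative of $\widetilde T_0$ at $(p_0,0)$ is the linear isomorphism $T_{p_0}\partial D\oplus\mathbb R\to\mathbb C^n$, $(\delta q,\delta t)\mapsto \delta q+\delta t\,v(p_0)$, so $\widetilde T_0$ is a local homeomorphism from a neighborhood of $(p_0,0)$ in $\partial D\times[0,\infty)$ onto a one-sided neighborhood of $p_0$ in $\mathbb C^n$. For any $z\in D$ sufficiently close to $p_0$, I will then apply a standard topological-degree argument to the continuous homotopy
$$G_s(q,t):=q+t\bigl((1-s)\,v(p_0)+s\,v(q)\bigr)+s\,R(q,t),\qquad s\in[0,1],$$
between $\widetilde T_0$ and $G$: using the uniform bounds above, the side and top of a suitably small half-ball $U\ni(p_0,0)$ map away from $p_0$ (hence from $z$), while the bottom lies on $\partial D$ and therefore misses $z\in D$. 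Homotopy invariance of Brouwer degree then yields $\deg(G,U,z)=\deg(\widetilde T_0,U,z)=\pm1$, so $z\in G(U)$. Writing $z=G(q,t)$ gives $F_q(z)=(-t,0,\dots,0)\in(-1,0)\times\{0\}^{n-1}$.

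The main technical difficulty is the low ($\mathcal C^\alpha$) regularity of the dependence $q\mapsto F_q$, which precludes a direct application of the smooth inverse function theorem or an implicit-function argument. The degree-theoretic substitute is viable only because the holomorphy of $F_q$ in $z$ provides the uniform quadratic remainder $R(q,t)=O(t^2)$, allowing $G$ to be treated as a sufficiently small continuous perturbation of the transverse linear model $\widetilde T_0$ for the homotopy argument to go through.
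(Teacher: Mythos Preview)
Your argument is correct and shares with the paper's proof the same core insight: the map $(q,t)\mapsto F_q^{-1}(-t,0,\dots,0)$ has only $\mathcal C^\alpha$ regularity in $q$, so the inverse function theorem is unavailable, but the holomorphy of $F_q^{-1}$ in the spatial variable yields a uniform first-order expansion in $t$, and a topological degree/fixed-point argument then forces surjectivity onto a neighborhood of $p_0$ in $\overline D$.

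The packaging differs. The paper first straightens $\partial D$ by a chart $\psi$, then uses the ordinary implicit function theorem in the \emph{smooth} variable $t$ to solve the normal component equation $\Phi_1(t,x)=s$, reducing the problem to showing that a map of the form $x\mapsto x+s\,\beta(s,x)$ is surjective for small $s$; this last step is done by Brouwer's fixed point theorem. You instead keep all $2n$ real variables together, compare $G$ to the frozen linear model $\widetilde T_0(q,t)=q+t\,v(p_0)$ (which \emph{is} a local diffeomorphism since $\partial D$ is $\mathcal C^2$), and run a single homotopy-invariance-of-degree argument. Your route is a bit more direct and avoids the intermediate dimension reduction; the paper's route makes the role of the smooth $t$-variable more explicit by isolating it via the implicit function theorem. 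Both hinge on exactly the same uniform estimate $G(q,t)=q+t\,v(q)+O(t^2)$ and the transversality of $v(q)$ to $T_q\partial D$, so neither gains generality over the other.

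One small point worth making fully explicit in your write-up: to conclude $\deg(\widetilde T_0,U,z)=\pm1$ you need that the $\widetilde T_0$-preimage of $z$ actually lies in the \emph{open} half-ball $U$ (i.e.\ has $t>0$), not merely in its closure. This follows because $v(p_0)$ is inward-pointing and $z\in D$, but it is the step that uses $z\in D$ rather than just $z$ close to $p_0$, so it deserves a sentence.
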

\begin{proof}
Applying one Fridman-Ma transformation we can assume that $p_0=0$ and a defining $\rho$ function of $D$ near $p_0$ is of the form $\rho(z) = 2 \re z_1 +|z|^2 + + o(|z|^{2+(1-\epsilon)\alpha}),$ as as $z\to 0$.

As in Remark~\ref{remhol} write $F_q(z)=\tilde F(z, \alpha_1(q), \ldots, \alpha_M(q))$, where $\alpha_j$ are $\mathcal C^\alpha$-continuous, $\alpha_j(0)=0$, $j=1,\ldots, M$. 

Let us consider the map
$$\varphi:(-1,\delta)\times \partial D\ni (t,q)\mapsto F_q^{-1}(t,0)\in \mathbb C^n,$$ which is well defined for $\delta>0$ sufficiently small.  In particular,
\begin{equation*}\varphi(t,q) = G(t, \alpha_1(q), \ldots, \alpha_M(q))
\end{equation*}
for $q\in \partial D$ close $0$, where $G$ is holomorphic near $0$ in $\mathbb C^{1+M}$. Moreover,
\begin{equation*}\label{Fq1} \varphi(t,0)=(r t+o(t^2),0) \ \text{for some $r>0$, } \ \text{and } \varphi(0,q)=q
\end{equation*}
(the first equality above follows from formulas $(iii)-(ix)$).
Consequently, we can write 
\begin{equation}\label{Fq}\varphi(t,q) = (rt,0) + q + t \Gamma(t,\alpha_1(q),\ldots, \alpha_M(q)),
\end{equation} for $\Gamma$ holomorphic near $0$, $\Gamma(0)=0$.

We need to show that there is $\epsilon>0$ and a neighborhood $U$ of $p$ in $\partial D$ such the range $\varphi((-\epsilon, \epsilon)\times U)$ contains a neighborhood of $0$. This is already claimed in the proof of \cite[Theorem 4.1]{DGZ}. The authors deduced this fact by claiming that $\varphi$ is a local diffeomorphism near $t=0$ and $q=0$. This, however, is not the case, if $\alpha<1$, as the mapping $\varphi$ is only $\mathcal C^\alpha$-smooth with respect to the variable $q$.

Nevertheless, a topological argument proves the above assertion also in the case when $\alpha=0$, i.e. when $\partial D$ is $\mathcal C^2$ smooth (in particular \cite[Theorem 4.1]{DGZ} is salvaged in its full generality). Let us present how it can be done.

Let us write $\partial D$ near $0$ as $\re z_1 =\tilde \rho(\re z_2, z')$, where $\tilde \rho$ is $\mathcal C^{2, \alpha}$ smooth. For  simplicity of  notation, we shall identify $\mathbb C^n$ with $\mathbb R^{2n}$ writing $(z_1,\ldots, z_n)= (s, x)=(s, x_2, \ldots, x_{2n})$. Let $V$ be a neighborhood of $0$ in $\mathbb R^{2n-1}$ and put $\psi(s,x) = (s + \tilde\rho(x),x)$ if $x\in V$ so that $\psi$ is a $\mathcal C^2$-smooth map that sends $\{0\}\times V$ diffeomorphically to $U$.

Consider $\Phi: (-\epsilon,\epsilon)\times V \to \mathbb R^{2n}$ given by $\Phi(t,x)=\psi^{-1}( \varphi (t, \psi(0,x))).$ Our aim is to show that $\Phi((-\epsilon,\epsilon)\times V)$ is a neighborhood of $0$. In other words, we want to show $(s,y_1,\ldots, y_{2n})$ is in the image of $\Phi$, i.e. that there is $(t,x)$ such that 
\begin{equation}\label{fix}\Phi(t,x)= (s,y)
\end{equation}
providing that $s$ and $y$ are small.
Write $\Phi=(\Phi_1, \ldots, \Phi_{2n})$. 

It follows from \eqref{Fq} that
$$\Phi(t,x) = (rt, x) + t \gamma(t,x),$$ where $\gamma$ is continuous and $\gamma(0,0)=0$.


For a fixed $x$ we are looking at the equation $\Phi_1(t,x) = s$. It follows from the implicit function theorem ($\Phi_1$ depends smoothly on $t$ as well as on continuous functions $\alpha_j$ that we treat as coefficients) that this equation has a solution $t=t(s,x)$ which is of the form 
\begin{equation}\label{tfix}t= s/r + s\delta(s,x)
\end{equation}where $\delta$ is continuous, $\delta(0)=0$.


Plugging $t=s/r + s\delta(s,x)$ to $(\Phi_2, \ldots, \Phi_n)(t,x)$ we get the mapping of the form $$ x\mapsto x + s \beta(s,x),$$ where again $\beta$ is continuous and $\beta(0)=0$. Fix a closed ball in $V$ with $0$ being its center and denote it by $K$. Take $s$ sufficiently small so that $s \beta(s,x)$ is in $\frac12 K$ for any $x\in K$. Then, by the Brouwer fixed point theorem, $\frac12 K$ is contained in the image of $x\mapsto x + s \beta(s,x)$ (this is because for any $y\in \frac12 K$ the mapping $x\mapsto y- s\beta(s,x)$ sends $K$ to $K$, so must have a fixed point). Thus, if $s$ is small and $y\in \frac12 K$, equation \eqref{fix} has a solution $x$; then $t$ is given by \eqref{tfix}.


\end{proof}

{\bf Lempert's scaling method}. 

We will use automorphisms of the ball $\mathbb B$
\begin{equation}\label{eq:aut}A_t(z) = \left(m_t(z_1), \sqrt{1-t^2} \frac{z'}{1 + tz_1}\right),\text{ where } m_t(z_1)=\frac{z_1+ t}{1+ tz_1}.
\end{equation} 
Given a domain of the form $F_p(D)$, with 
$\partial D$ of class $\mathcal C^{2,\alpha}$ and $F_p$ as in
Definition \ref{fmmap}, we consider $D_0=T_{e_1}(D)$
where $e_1=(1,0)\in \C \times \C^{n-1}$ and $T_{e_1}$ is 
the translation $z\mapsto z+e_1$.
Then $D_0$ admits a defining function  of the form  $\rho_0(z) = -1 +|z|^2 +  + o(|z|^{2+(1-\epsilon)\alpha}),
\forall \epsilon >0,$ as $z\to e_1,$
and that is contained in the ball $B'$ of radius $1+s'$ centered at $(-s',0)$ for some $s'\geq 0$. Note that $A_t^{-1}$ is well defined on $B'$ and $A_t^{-1}(z)\to (-1,0)$ uniformly on compact subsets of $B'\setminus \{e_1\}$ as $t\to 1$. Define 
\begin{equation}\label{eq:Dt} D_t=A_t^{-1}(D_0)\quad \text{and} \quad \rho_t(z) = \frac{|1 + t z_1|^2}{1- t^2} \rho_0(A_t(z)),\ t\in (0,1).
\end{equation}
Clearly $D_t=\{\rho_t<0\}$ and $\rho_t(z)$ converges to $-1 +|z|^2$ locally uniformly in $\{\re z_1>-\alpha\}$ for any $\alpha<1$ as $t\to 1$. 

Summing up, for any $\beta>-1$:
\begin{enumerate}
\item[A1)] $D_t\cap \{\re z_1>\beta\}$ converges to $\mathbb B \cap \{\re z_1>\beta\}$ in $\mathcal C^{2,\alpha}$-topology (here and in the sequel this means convergence of defining functions),
\item[A2)] $D_t$ converges to the ball in the Hausdorff topology.
\end{enumerate}



\medskip


We now must
restrict ourselves to the case when $\partial D$ is of class $\mathcal C^{2,\alpha}$ with
$\alpha>0$. The reason is that we must use Lempert's work on complex geodesics \cite{L1, L3, L4}
(see also a discussion in \cite[Remark B]{H3})
and the arguments about the dependence and regularity of complex geodesics 
 fail if we merely assume $\mathcal C^2$-smoothness \cite[remark after the Main Theorem, p. 561]{L3}. 

In \cite{L3} and \cite{L4} Lempert considered the class $S^k$, where $k>1$ is non-integer; we refer the reader to \cite[p. 561]{L3} for a precise definition. If $D\in S^{m+\alpha}$, $m\in\mathbb N$, $0<\alpha<1$,
then $\partial D$ is of class $\mathcal C^{m,\alpha}$;
if $\partial D$ is of class $\mathcal C^{m,\alpha}$, then  $D\in S^{m-1+\alpha}$. 
Thus it is proven in \cite[Corollary 3.3]{L3} that any complex geodesic in a $\mathcal C^{2,\alpha}$-smoothly bounded strictly convex domain $D$ is in $\mathcal C^{1,\alpha}(\overline{\mathbb D})$. 
Also, the $\mathcal C^{1/2}(\overline{\mathbb D})$ norm of a complex geodesic $f$ is uniformly bounded when $f(0)$ is within a compact subset of $K$ of $D$, with bounds depending only on the diameter of $D$ and
normal curvatures of $\partial D$ and on $K$ \cite[Proposition 13]{L1}, so there is local uniformity with respect to the domain as well, in the sense of $\mathcal C^{2,\alpha}$ convergence of defining functions. 

Using those a priori estimates in the $\mathcal C^6$ case and approximation (see \cite[p. 562--563]{L3}), Lempert showed in \cite[Proposition 3]{L4} that the bound on $\mathcal C^{1,\alpha}(\overline{\mathbb D})$ norm is in fact locally uniform. 
This implies the following. 

\begin{proposition}
\label{prop:c1conv}
Let $D_n$ be a sequence of strictly convex domains such that their defining functions converge 
in the $\mathcal C^{2,\alpha}$ sense to the defining function of 
a strictly convex domain $D$. Let $K$ be a compact subset of $D$ and $f_n$  a sequence of complex geodesics in $D_n$ such that $f_n(0)\in K$. Then $\mathcal C^{1,\alpha}(\overline{\mathbb D})$-norms of $f_n$ are uniformly bounded, for $n$ big enough, and a subsequence of $(f_n)$ converges in the $\mathcal C^1(\overline{\mathbb D})$ topology
to a complex geodesic we denote by $f$.
\end{proposition}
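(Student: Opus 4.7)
The plan is to assemble the proposition from Lempert's a priori estimates together with an Arzelà--Ascoli compactness argument. First, observe that since the defining functions $\rho_n$ of $D_n$ converge in $\mathcal C^{2,\alpha}$ to the defining function $\rho$ of the strictly convex domain $D$, the diameters of $D_n$, their normal curvatures, and the strict convexity constants are uniformly controlled for large $n$. Moreover, the compact set $K\subset D$ is contained in $D_n$ for $n$ sufficiently large, and $\dist(K,\partial D_n)$ is uniformly bounded below. These are exactly the quantities on which \cite[Proposition 13]{L1} depends, so that Proposition yields a uniform bound $\|f_n\|_{\mathcal C^{1/2}(\overline{\DD})}\leq C_0$.

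Next, I would upgrade the $\mathcal C^{1/2}$ bound to a uniform $\mathcal C^{1,\alpha}$ bound by invoking \cite[Proposition 3]{L4}. Lempert obtains the $\mathcal C^{1,\alpha}$ estimate for a fixed $\mathcal C^{2,\alpha}$-smooth strictly convex domain by approximating it by $\mathcal C^6$-smooth domains, using the a priori bounds available there, and passing to the limit; inspection of the argument (as highlighted in \cite[pp.~562--563]{L3}) shows that the final estimate depends only on the $\mathcal C^{2,\alpha}$ norm of the defining function, the strict convexity constants, and a lower bound for $\dist(f_n(0),\partial D_n)$. All these quantities are uniform in $n$, so we obtain the desired uniform bound $\|f_n\|_{\mathcal C^{1,\alpha}(\overline{\DD})}\leq C$.

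With this bound in hand, the family $\{f_n\}$ is equicontinuous and uniformly bounded on $\overline{\DD}$, so by Arzelà--Ascoli a subsequence converges uniformly to some $f\in\mathcal C(\overline{\DD},\overline D)$. The uniform $\mathcal C^{1,\alpha}$ bound implies that $\{f_n'\}$ is a bounded family in $\mathcal C^{\alpha}(\overline{\DD})$; a further subsequence converges uniformly on $\overline{\DD}$, and the two uniform limits are compatible, giving convergence in $\mathcal C^1(\overline{\DD})$. Because $f(0)\in K\subset\subset D$, the limit $f$ maps into $D$ on a neighborhood of $0$; extremality is preserved in the limit via the standard stability of the Kobayashi--Royden metric under Hausdorff/$\mathcal C^2$-convergence of strongly pseudoconvex (in particular strictly convex) domains. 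Hence $f$ is a complex geodesic in $D$.

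The main obstacle is the second step: extracting from Lempert's papers that his $\mathcal C^{1,\alpha}$ estimate is genuinely locally uniform in the domain, and not only valid for each fixed domain separately. This requires a careful reading of the approximation scheme in \cite{L3,L4}, checking that each constant appearing in the chain of estimates depends only on $\mathcal C^{2,\alpha}$ data; the remainder of the argument is then a routine compactness statement.
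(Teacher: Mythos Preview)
Your proposal is correct and follows essentially the same route as the paper: the uniform $\mathcal C^{1,\alpha}$ bound is extracted from Lempert's work (\cite[Proposition~13]{L1} for the $\mathcal C^{1/2}$ bound, then \cite[Proposition~3]{L4} for the upgrade, with local uniformity in the domain), after which Arzel\`a--Ascoli applied to $f_n'$ gives $\mathcal C^1(\overline{\DD})$ convergence of a subsequence.

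The one point where you differ from the paper is the identification of the limit $f$ as a complex geodesic. The paper observes that $f$ satisfies Lempert's extremality (stationary disc) conditions from \cite{L1}, which pass to the limit under $\mathcal C^1$ convergence up to the boundary together with $\mathcal C^2$ convergence of $\partial D_n$; this automatically gives $f(\partial\DD)\subset\partial D$, hence properness, and then $f$ is the (unique) complex geodesic. Your argument via stability of the Kobayashi--Royden metric is also valid, but as written it only yields extremality near $0$ and you should add a word explaining why $f(\DD)\subset D$ globally (e.g.\ $f'(0)\neq 0$ since $\kappa_{D_n}(f_n(0);f_n'(0))=1$ with $\kappa_{D_n}$ uniformly bounded above on $K$, so $f$ agrees with the unique complex geodesic in $D$ through $f(0)$ in the direction $f'(0)$). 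The paper's route is slightly more direct here.
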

\begin{proof}
 It follows from the hypothesis that the $\mathcal C^{\alpha}(\overline{\mathbb D})$ norms of $f_n'$ are uniformly bounded as well. Applying the Arzel\`a-Ascoli theorem to $f_n'$ we can assume that a subsequence converges uniformly on $\overline{\mathbb D}$ to an analytic disc, say $g$. Also, a subsequence of $(f_n)$ converges on $\overline{\mathbb D}$ to an analytic disc that we shall denote by $f$. Clearly $f'=g$ and a subsequence $f_n$ converges to $f$ in $\mathcal C^1(\overline{\mathbb D})$ topology. Note that $f$ is a (unique) complex geodesic in $D$, because
 it verifies the extremality conditions from \cite{L1}.
 \end{proof}

Actually,  something can still be said about complex geodesics in $D$ in the case when $\alpha=0$, that is when a strongly convex domain $D$ has $\mathcal C^2$-smooth boundary. Chirka, Coupet and Sukhov established in \cite[Corollary 1.5]{CCS} the $\mathcal C^{1-\epsilon}$ continuity up to the boundary for a holomorphic map $f$ from $\mathbb D$
with boundary values in a totally real $\mathcal C^1$ manifold $M$.  The manifold to be considered here
is, in the terminology of \cite{L3}, $\widehat{\partial D}:= \{(z, T_z^{\mathbb C} \partial D), z\in \partial D\} \subset\mathbb C^n\times \mathbb P^{n-1}$, while if we have a geodesic $\phi$, the map has to be $(\phi,\tilde \phi)$, see
\cite[proof of Lemma 3.1]{L3}.
Also, their result implies that one can control $\mathcal C^{1-\epsilon}$ norms of geodesic also under $\mathcal C^2$-perturbations of $\partial D$ (see the discussion following \cite[Corollary 1.5]{CCS}).

\section{Proof of Theorem~\ref{thm:main}}

We shall divide the proof into two cases, according to whether the direction generated by $z$ and $w$ is
``tangential'' or ``normal''. In order to give this a precise meaning, first recall that it is enough to 
prove our theorem for $z, w$ in a neighborhood $U$ of $\partial D$, which we will take small enough so that 
for any $z\in U\cap D$, there exists a unique point $\pi(z)\in \partial D$ such that $|z-\pi(z)|
= \min\{ |z-\zeta|, \zeta \in \mathbb C^n \setminus D\}$. We denote by $n_z$ the outer unit normal to 
$\partial D$ at $\pi(z)$, and write, for any vector $v$, $v_z:=\langle v,n_z\rangle n_z$ where $\langle \cdot, \cdot\rangle$ stands for the usual Hermitian inner product. 

We first deal with the (almost) ``tangential'' case.
\begin{lemma}\label{lem:eps}
Let $D$ be a strongly pseudoconvex domain with $\mathcal C^{2, \alpha}$ boun\-dary. There exists
$\epsilon_0>0$ such that if $0<\epsilon<\epsilon_0$, and if $z,w\in D$ satisfy $\delta(z)<\epsilon$, $|z-w|<\epsilon$ and $|(z-w)_z|$  $ < \epsilon |z-w|$, then $z$ and $w$ lie on a stationary map (thus on a complex geodesic) whose diameter is $o(1)$ as $\epsilon \to 0$. Moreover, $|(x-y)_x|/|x-y|$  $= o(1)$ for any points $x,y$ of this complex geodesic.
\end{lemma}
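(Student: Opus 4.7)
The idea is to combine the two constructions from Section~\ref{sec:FM}: a Fridman--Ma map $F_q$ with base point $q$ chosen via Lemma~\ref{rem} straightens the boundary near $\pi(z)$, and a subsequent Lempert automorphism $A_{-t}$ with $t\to 1^-$ acts as an anisotropic magnifying glass (stretching the normal direction by a factor $\asymp 1/(1-t)$ and each tangential direction by $\asymp 1/\sqrt{1-t}$). After the right choice of parameters, this brings $(z,w)$ into a strictly convex domain $D_t$ whose defining function is close to that of $\mathbb B$, with the images $\widehat z,\widehat w$ of $z,w$ lying in a fixed compact subset of $\mathbb B$ and with $|\widehat z-\widehat w|=o(1)$. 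In $\mathbb B$ complex geodesics are chords in affine complex lines, so Lempert's theorem together with Proposition~\ref{prop:c1conv} will produce a complex geodesic in $D_t$ joining $\widehat z,\widehat w$. Pulling back through $F_q^{-1}\circ T_{-e_1}\circ A_t$ then yields the desired stationary map in $D$, whose small diameter and near-tangentiality follow from the anisotropic contraction rates of the pull-back.

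\smallskip\noindent
\emph{Step 1 (Setup).} Apply Lemma~\ref{rem} to find $q\in\partial D$ close to $\pi(z)$ with $F_q(z)=(-s,0)$, $s\asymp\delta(z)$. Writing $F_q(w)-F_q(z)=(u_1,u')\in\mathbb C\times\mathbb C^{n-1}$, Remark~\ref{remhol} gives $|u_1|+|u'|\asymp|z-w|$; moreover the Jacobian of $F_q$ at $\pi(z)$ sends the outward normal to the $\re z_1$ direction up to a small rotation, so the hypothesis $|(z-w)_z|<\epsilon|z-w|$ translates into $|u_1|\le C\epsilon|z-w|$. Plugging $F_q(w)\in D$ into the normal form \eqref{rho} produces the a priori estimate $s\gtrsim\tfrac12|u'|^2-C\epsilon|z-w|$, which in particular keeps $|u'|^2/s$ uniformly bounded.

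\smallskip\noindent
\emph{Step 2 (Scaling).} Set $r:=s+|z-w|^2$ and $t:=1-r$; translate by $e_1$ and apply $A_{-t}$. A direct calculation from \eqref{eq:aut} shows that $\widehat z:=A_{-t}(F_q(z)+e_1)$ and $\widehat w:=A_{-t}(F_q(w)+e_1)$ both lie in a fixed compact $K\subset\mathbb B$ for $\epsilon$ small, and that $|\widehat z-\widehat w|\to 0$ as $\epsilon\to 0$. The tangential hypothesis plays the decisive role here: the difference $\widehat w-\widehat z$ has normal component $\asymp u_1/r$ and tangential component $\asymp u'/\sqrt r$, so without the bound $|u_1|\le C\epsilon|z-w|$ the normal component would blow up under the magnification.

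\smallskip\noindent
\emph{Step 3 (Limit geodesic).} By property A1) of Section~\ref{sec:FM}, $D_t:=A_{-t}(T_{e_1}(F_q(D)))$ converges to $\mathbb B$ in $\mathcal C^{2,\alpha}$-topology on compacts of $\{\re z_1>\beta\}$; for $\epsilon$ small $D_t$ is therefore strictly convex in a neighborhood of $K$. Lempert's theorem furnishes a complex geodesic $f_t:\mathbb D\to D_t$ passing through $\widehat z,\widehat w$; normalizing $f_t(0)=\widehat z$ and invoking Proposition~\ref{prop:c1conv}, a subsequence converges in $\mathcal C^1(\overline{\mathbb D})$ to a complex geodesic $f$ of $\mathbb B$, which parametrizes an affine chord.

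\smallskip\noindent
\emph{Step 4 (Pull-back and diameter).} The composition $\Psi:=F_q^{-1}\circ T_{-e_1}\circ A_t$ is a biholomorphism from $D_t$ onto $D$ carrying stationary maps to stationary maps, so $\Psi\circ f_t$ is a complex geodesic in $D$ through $z$ and $w$. The derivative of $A_t$ at $\widehat z$ has the form $\diag(1-t^2,\sqrt{1-t^2},\dots,\sqrt{1-t^2})=\diag(r,\sqrt r,\dots,\sqrt r)(1+o(1))$, contracting the bounded image $f_t(\mathbb D)\subset\mathbb B$ to a set of Euclidean diameter $O(\sqrt r)$ in $D_0$; the bounded Jacobian of $F_q^{-1}$ from Remark~\ref{remhol} then yields $\diam(\Psi\circ f_t)(\mathbb D)=O(\sqrt r)=o(1)$.

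\smallskip\noindent
\emph{Step 5 (Propagation).} The same anisotropic contraction shows that every tangent vector of $\Psi\circ f_t$ has its normal component smaller than its tangential component by a factor $\sqrt r=o(1)$. Hence $|(x-y)_x|=o(|x-y|)$ uniformly for any two points $x,y$ on the image of the geodesic, which is the last assertion.

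\smallskip\noindent
The main obstacle is Step~2, namely the balancing of the parameters $s$, $|z-w|$, $|u_1|$, $|u'|$ so that both $\widehat z$ and $\widehat w$ remain in a fixed compact subset of $\mathbb B$ while simultaneously their mutual distance shrinks. This balancing rests crucially on the tangential hypothesis, which keeps the magnified normal contribution $u_1/r$ under control and prevents $\widehat w$ from escaping toward $\partial\mathbb B$.
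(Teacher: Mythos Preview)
Your approach is genuinely different from the paper's, but as written it has two real gaps.

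The claim at the end of Step~1 that the a~priori inequality ``keeps $|u'|^2/s$ uniformly bounded'' is false. In $\mathbb B$, take $z=(1-\delta,0)$ and $w=(1-\delta-\tfrac{9}{10}\epsilon a,\,a,0,\dots,0)$ with $a=\epsilon/2$ and $\delta\ll\epsilon^2$. All three hypotheses of the lemma hold and $w\in\mathbb B$, yet $s\asymp\delta$ while $|u'|=a\asymp\epsilon$, so $|u'|^2/s\asymp\epsilon^2/\delta\to\infty$. The inequality $s\gtrsim\tfrac12|u'|^2-C\epsilon|z-w|$ is correct, but for $|u'|\lesssim\epsilon$ its right-hand side may be negative and nothing follows. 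With your choice $r=s+|z-w|^2$ one then gets $\widehat z_1\approx (r-s)/(r+s)\to 1$, so $\widehat z$ escapes every compact of $\mathbb B$; in fact no single $t$ keeps both $\widehat z$ and $\widehat w$ in a fixed compactum once $\delta(z)/\delta(w)$ is allowed to degenerate. Step~2 therefore fails as stated.

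Step~3 has an independent problem: Lempert's theorem and Proposition~\ref{prop:c1conv} require \emph{global} strict convexity, whereas A1)--A2) only say that $D_t\cap\{\re z_1>\beta\}$ is close to a spherical cap; the whole of $D_t$ is merely strongly pseudoconvex. You cannot manufacture a complex geodesic of $D_t$ this way without first knowing it stays in the convex part, which is precisely the small-diameter statement being proved. The same circularity reappears in Step~4: the contraction $\diag(r,\sqrt r,\dots,\sqrt r)$ is the differential of $A_t$ near $\widehat z$, not a global Lipschitz bound (indeed $A_t$ is a bijection of $D_t$ onto $D_0$, whose diameter is fixed), so concluding $\diam\Psi(f_t(\mathbb D))=O(\sqrt r)$ already presupposes $f_t(\mathbb D)\subset\{\re z_1>\beta\}$. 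The paper sidesteps all of this: it first treats the strictly convex case by a visibility/transversality contradiction (a geodesic of large diameter would meet a fixed compactum, hence hit $\partial D$ transversally, forcing $|(z-w)_z|\ge c|z-w|$), and then, for general $D$, exposes the point, runs the convex argument in $D\cap U$, and promotes the resulting stationary disc to a complex geodesic of $D$ via a left-inverse (winding-number) argument. The final tangentiality statement is obtained from \cite[Theorem~3]{KN} rather than from scaling.
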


Part of this result was proven in \cite{BFW} under a $\mathcal C^3$ smoothness assumption (see also \cite{Kos}). Below we shall show how to deduce it in the $\mathcal C^{2,\alpha}$ setting directly from Lempert's work.

\begin{proof}
First make the additional assumption that $D$ is convex.
Since it is strongly pseudoconvex (bounded), it enjoys the
visibility property (\cite{BZ}, or use \cite{BB} and the fact that
the Gromov boundary of a Gromov hyperbolic space is always visible): given two distinct points $p,q \in \partial D$,
and sequences $p_n\to p$, $q_n\to q$, there exists a compact
set $K\subset D'$ such that for $n$ large enough, the real 
geodesic from $p_n$ to $q_n$ intersects $K$. It is easy to
see that this can be made uniform: as soon as $|p-q|\ge \delta>0$,
there is a $K_\delta \subset \subset D'$ such that geodesics from
$p_n$ to $q_n$ eventually intersect $K_\delta$.
In particular, if the diameter of the 
complex geodesic through $z$ and $w$ is big enough, and $z$ and $w$
are close to the boundary,
the complex geodesic needs to intersect a fixed compactum $K$,
and thus meets the boundary transversely (uniformly with respect to the point in $K$, by \cite[Proposition 14]{L1}). Therefore, for points on such a complex geodesic  $z,w$ which are close enough to each other and to the boundary, $|(z-w)_z|\geq C |z-w|,$ where $C>0$ is uniform.  This contradicts the assumptions with $\epsilon_0$ small enough, we would have a contradiction, so the assertion about the diameter is proved.

For general strongly pseudoconvex domains one can derive the assertion by exposing points as in \cite{DFW}: using a biholomorphism on a neighborhood of $\overline{D}$, 
we can assume that $D\subset \mathbb B$, $p\in \partial D \cap \partial \mathbb B$. Furthermore by the regularity assumptions $\partial D$ remains strongly convex in a neighborhood of $p$, so taking $\epsilon_0$ small enough we may assume $z_n,w_n, q \in U$, a neighborhood of $p$ such that $D\cap U$ is smooth strongly convex and $U\cap \partial D$ is close to $U\cap \partial \mathbb B$.
Consider the complex geodesic $f:\mathbb D\longrightarrow D\cap U$ obtained for the convex domain $D\cap U$ by the  argument above. This is a stationary map in the sense of Lempert \cite[Section 3, p. 562, points 1--3]{L3}, meaning in particular that there exists a $\mathcal C^{1/2}$ positive 
function $p(\zeta)$, $\zeta \in \partial \DD$, such that the function $\tilde f(\zeta):= \zeta p(\zeta) \overline{n_{f(\zeta)}}$ extends to a holomorphic function on $\DD$. Therefore it is also stationary as a map $f:\mathbb D\longrightarrow D$. 

Making $z,w$ even closer to $p$ as needed,  we may assume that $\overline{f(\DD)}\cap \partial (D\cap U)$ is contained in $\partial D$ and close enough to $p$ so that for any $z\in D$, $\re \langle z-f(\zeta), n_{f(\zeta)}\rangle <0$.
A winding number argument then shows that the equation 
$ \langle z-f(\zeta), \overline{\tilde f(\zeta) } \rangle=0$ admits
a unique solution $F(z)$ in $\DD$, which has to be holomorphic, and this uniqueness shows that $F(f(\zeta))=\zeta$ for any $\zeta \in \DD$ (in other words we have a left-inverse, see \cite{KZ}) and 
therefore the map $f$ is indeed a complex geodesic for $z,w$ with respect to $D$.

The last statement in the Lemma follows from \cite[Theorem~3]{KN}: if $x,y$ are in the range of a complex geodesic $\varphi$, then $|(x-y)_x|\lesssim |x-y| \diam (\varphi)$\footnote{As usual, $f\lesssim g$ means that $f\le Cg$ for some uniform constant $C>0$ (depending only on $D$); $f\simeq g$ is understood analogously}.
\end{proof}

\begin{corollary}\label{lem:uni}
Let $D$ be a domain in $\mathbb C^n$ and $p\in \partial D$ a strongly
pseudoconvex point. If $z,w\in D$ are close to $p$ and $|(z-w)_z|/|z-w|$ is sufficiently small, then there is a unique real geodesic for $z,w$ and it is induced by a complex one.
\end{corollary}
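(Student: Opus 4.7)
The plan is to obtain existence (and the induced-by-complex-geodesic property) directly from Lemma~\ref{lem:eps}, and to establish uniqueness by reducing to the strictly convex setting via the exposing biholomorphism of \cite{DFW}, after which Lempert's uniqueness theorem for real geodesics applies.

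First, Lemma~\ref{lem:eps} will furnish a complex geodesic $\varphi\colon\DD\to D$ whose range contains both $z$ and $w$ and has Euclidean diameter $o(1)$ as $|z-w|\to 0$. Since $\varphi$ is an isometric embedding of the Poincar\'e disc into $(D,k_D)$, the $\varphi$-image of the unique hyperbolic geodesic joining $\varphi^{-1}(z)$ to $\varphi^{-1}(w)$ is then a real Kobayashi geodesic from $z$ to $w$, by construction induced by $\varphi$.

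For uniqueness, I would expose $p$ as in \cite{DFW} via a biholomorphism $\Psi$ of a neighborhood of $\ov D$, obtaining $D':=\Psi(D)\subset\BB$ with $\Psi(p)\in\partial D'\cap\partial\BB$ and $D'$ strictly convex inside some neighborhood $V$ of $\Psi(p)$; since $\Psi$ is a Kobayashi isometry, this reduces the problem to $D'$. For $z,w$ close enough to $p$, the diameter bound from Lemma~\ref{lem:eps} places $\Psi\circ\varphi$ inside $V$, and the earlier weak Gehring--Hayman estimate $\diam(\gamma')\lesssim|\Psi(z)-\Psi(w)|^{1/2}$ available for strongly pseudoconvex domains (see \cite{LPW,NO}) forces every real geodesic $\gamma'$ from $\Psi(z)$ to $\Psi(w)$ in $D'$ to lie in $V$. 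Inside the strictly convex $\mathcal C^{2,\alpha}$-smooth $D'\cap V$, Lempert's theorem (\cite{L1}, with $\mathcal C^{2,\alpha}$ extensions in \cite{L3,L4}) provides a unique complex geodesic through $\Psi(z),\Psi(w)$ and a unique real geodesic obtained as its restriction to the hyperbolic geodesic of $\DD$; since $\Psi\circ\varphi$ is such a complex geodesic, $\gamma'$ must coincide with the induced one.

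The main technical point will be the transfer of Lempert's uniqueness from $D'\cap V$ to the ambient $D'$: a $D'$-Kobayashi geodesic confined to $V$ need not a priori be a $D'\cap V$-Kobayashi geodesic, since the two infinitesimal metrics differ on $V$. I would handle this via the holomorphic left-inverse $h\colon D'\to\DD$ of $\Psi\circ\varphi$ produced by the winding-number argument in the proof of Lemma~\ref{lem:eps}. Strict convexity of $D'$ on $V$ makes $h$ strictly distance-decreasing at each point of $V$ in directions transverse to $(\Psi\circ\varphi)(\DD)$; combined with the fact that $h\circ\gamma'$ must be a hyperbolic geodesic of $\DD$ (since $\Psi\circ\varphi$ is an isometry and $h$ is $1$-Lipschitz for the Kobayashi distances), this transverse strict contraction forces $\gamma'(t)$ to be tangent to $(\Psi\circ\varphi)(\DD)$ for every $t$, hence $\gamma'\subset(\Psi\circ\varphi)(\DD)$, as required.
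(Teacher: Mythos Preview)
Your existence argument and the reduction via the exposing map of \cite{DFW} are correct and parallel the paper's setup. The uniqueness argument, however, diverges from the paper's and has a gap in the last paragraph.

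The problematic step is ``this transverse strict contraction forces $\gamma'(t)$ to be tangent to $(\Psi\circ\varphi)(\DD)$ for every $t$, hence $\gamma'\subset(\Psi\circ\varphi)(\DD)$''. Write $\Phi:=\Psi\circ\varphi$. The phrase ``$\gamma'(t)$ tangent to $\Phi(\DD)$'' only makes sense when $\gamma'(t)$ already lies on $\Phi(\DD)$, which is what you want to prove. Your strict-contraction claim is formulated only for directions at points \emph{on} $\Phi(\DD)$; at an off-disc point $\gamma'(t)$ there is no notion of ``transverse direction'', and the infinitesimal isometry $\kappa_{D'}(\gamma'(t);\gamma'^{\,\prime}(t))=\kappa_{\DD}\bigl(h(\gamma'(t));dh\,\gamma'^{\,\prime}(t)\bigr)$ (which does follow from $h\circ\gamma'$ being a Poincar\'e geodesic) does not by itself force $\gamma'(t)\in\Phi(\DD)$. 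Even at the initial point $a\in\Phi(\DD)$, tangency of $\gamma'^{\,\prime}(0)$ to $\Phi(\DD)$ does not prevent $\gamma'$ from curving off the disc, so the implicit open--closed continuation argument does not close. What you actually need is: whenever $h$ realises the Carath\'eodory distance $c_{D'}(a,x)$ (equivalently, the infinitesimal Carath\'eodory--Reiffen metric at $(x,v)$), then $x\in\Phi(\DD)$. This amounts to showing that $\Phi$ is the \emph{only} complex geodesic with left-inverse $h$, and the way one proves that is by taking convex combinations $(1-\theta)\Phi+\theta\Phi'$ of two candidates, checking they are still extremal (since $h\circ((1-\theta)\Phi+\theta\Phi')=\id_{\DD}$), hence proper, and then using strict convexity of $\partial D'$ near $p$ to force $\Phi(e^{i\alpha})=\Phi'(e^{i\alpha})$ on $\partial\DD$. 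That is precisely the mechanism of \cite[Lemma~3.2]{KZ}, which is exactly the device the paper invokes directly: it takes a second complex geodesic $\psi$ through $z$ and a point $\eta(t_0)$ on the competing real geodesic (produced again by Lemma~\ref{lem:eps}), forms the family $(1-\theta)\varphi+\theta\psi$, and derives a contradiction from properness and strict convexity. So once your retraction argument is made rigorous it converges to the same key idea as the paper's proof.

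A secondary issue: your appeal to the $|z-w|^{1/2}$ diameter bound of \cite{LPW,NO} to confine every real geodesic $\gamma'$ to $V$ presupposes that $D$ is globally strongly pseudoconvex, whereas the corollary only assumes this at $p$. The paper sidesteps any a priori confinement of the competing geodesic $\eta$ by working with a point $\eta(t_0)$ chosen just outside $\varphi(\DD)$, so that $\eta(t_0)$ is automatically close to $p$ and Lemma~\ref{lem:eps} applies to the pair $(z,\eta(t_0))$.
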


\begin{proof} 
Recall there is a neighborhood $U$ of $p$ so that there exists
a biholomorphism on a neighborhood of $\overline{D\cap U}$ making $D$
convex.

Let $\varphi$ be any complex geodesic passing through $z,w$. By Lemma
\ref{lem:eps}, the range of $\varphi$ is close to the boundary, its diameter is also small, so we
may assume $\varphi(\mathbb D)\subset U$. 
So after biholomorphism we can reduce ourselves
to stationary discs in a strictly convex domain $D'$.

Inside the complex geodesic $\varphi$ there can only be one real geodesic, that we shall denote  by $\gamma$, $\gamma(0)=z$, $\gamma(t_1) = w$. Suppose that there is another real geodesic between $z$ and $w$, say $\eta$. Then for some $t_0$ the point $\eta(t_0)$ must lie outside the range of $\varphi$ and
we can choose it so that $|(\eta(0)-\eta(t))_{\eta(0)}|/|\eta(0)-\eta(t)|$ 
is small for all $t\in[0,1]$. As in Lemma \ref{lem:eps} take a complex geodesic $\psi$ passing through $\eta(0)$ and $\eta(t_0)$;
it has to be close to $\eta$ when $t_0$ is chosen close to $\sup\{t: \eta(t)\notin \varphi(\mathbb D)$. 
Consider the one-parameter family of maps from the unit disc to $D'$ given by $(1-\theta)\varphi + \theta \psi$, then
they are extremal for $z,w$ and
for $\theta$ close to $0$,  \cite[Lemma 3.2]{KZ} shows that they must be proper into $D'$, a contradiction.
\end{proof}

\subsection{Theorem~\ref{thm:main} -- tangential case}

Let $z_n,w_n\in D$ be such that $z_n,w_n\to p\in \partial D$, and $|(z_n-w_n)_{z_n}|/|z_n-w_n|\to 0$. Let $\varphi_{z_n,w_n}$ be the unique complex geodesic that passes through $z_n$ and $w_n$. Let $\gamma_{z_n,w_n}$ be the unique real geodesic for $z_n,w_n$, induced by $\varphi_{z_n,w_n}$.

We shall follow the notation from Section~\ref{sec:FM}. 
For each $n$, take a point $p_n \in \overline{\varphi_{z_n,w_n}(\DD)}$ so that $p_n\to p$ and then apply the transformation $F_{p_n}$. Let $D_n:=F_{p_n}(D)$.
So we are reduced to the case where
$p=(1,0)\in \partial D_n$ lies in the range of the closed analytic disc $\overline{\varphi_{z_n,w_n}(\DD)}$ and $D_n$ near the point $p$ is given by the inequality
\begin{equation}\label{eq:D}
\{|z|^2 + o(|z-p|^{2+\alpha})<1\}.
\end{equation}
We now drop the subscript from the notation and write $D$ for $D_n$
and $z,w$ for $z_n,w_n$.  For $t\in (0,1)$, $D_t:=A_t^{-1}(D)$ as in \eqref{eq:Dt}.

Then for $t$ well chosen, 
close to $1$, $\psi_{z,w}:=A_t^{-1}\circ \varphi_{z,w}$ is a complex geodesic in $D_t$ that lies entirely in $\{\re z_1\geq 0\}$, with its boundary intersecting $\{\re z_1 =0\}$;  it is close in $\mathcal C^{1}$ topology on $\overline \DD$ to a geodesic in $\BB_n$, after a possible reparametrization. Its range intersects a fixed compact subset $K$ of $\mathbb B$, independent of $n$, and contains $(1,0)$ in its closure.

Complex geodesics in $\mathbb B_n$  are intersections of $\mathbb B_n$ with complex affine lines. Therefore, a simple argument shows that any
complex 
geodesic $\psi=(\psi_1, \tilde \psi)$ in $\mathbb B_n$ contained in $\{ \re z_1\geq 0\}$
that passes through $(1,0)$, and such that $\psi(0)\in K$, satisfies the following uniform estimates:
$$
\left| \frac{\tilde \psi(\mu)}{1+ t \psi_1(\mu)} - \frac{\tilde \psi (\lambda)}{1 + t \psi_1(\lambda)} \right| 
\simeq |\mu - \lambda|\simeq |\tilde \psi(\mu) - \tilde \psi(\lambda)|.
$$ 
Trivially, 
$$|\psi_1(\mu) - \psi_1(\lambda) |\lesssim |\lambda - \mu|,$$ again uniformly.
By $\mathcal C^1$ convergence argument, all these uniform estimates remain true for the geodesics $\psi_{z,w}$ (after proper reparametrizations). In particular, if $x=(x_1, x'), y=(y_1,  y')$ are two points lying in the range
of any $\psi_{z,w}$, then 
\begin{equation}\label{eq:xy} \left| \frac{y'}{1+ t y_1} - \frac{x'}{1+ tx_1}\right| \simeq |y' - x'| \simeq |y- x|.
\end{equation}

Let $x,y$ be in the range of  $\eta_{z,w}:=A_t^{-1}(\gamma_{z,w})$. Clearly,
$$A_t(x) - A_t(y) = \left( \frac{(1-t^2) (y_1 - x_1)}{(1 + t x_1)(1+ty_1)}, (1-t^2)^{1/2}( \frac{ y'}{1 + t y_1} - \frac{x' }{1 + tx_1}) \right).$$
In particular, by the estimates from \eqref{eq:xy}:
\begin{equation}\label{eq:A}|A_t(y) - A_t(x)|\simeq (1-t^2)^{1/2} |y- x|,
\end{equation}
where $\simeq$ is uniform.

As already mentioned, $\eta_{z,w}$ are real geodesics, induced by 
$\psi_{z,w}$.
The geodesics $\psi_{z,w}$ intersect a fixed compact set $K$. 
By Proposition \ref{prop:c1conv} and its proof,
they are uniformly $\mathcal C^{1+\alpha}$-smooth on the closed disc in the sense that their $\mathcal C^{1+\alpha}$-norms (so $\mathcal C^1$ as well) are uniformly bounded, which implies the assertion of Theorem~\ref{thm:main} for $\eta_{z,w}$. 
From this and \eqref{eq:A} one can deduce the assertion for $\gamma_{z,w}=A_t(\eta_{z,w})$.


\subsection{Theorem~\ref{thm:main} -- non-tangential case: preparatory results}

A visibility
lemma for the class of ``Goldilocks'' domains (which includes
finite type domains) was given by Bharali-Zimmer \cite{BZ}. 
The following visibility lemma is essentially due to Bracci-Fornaess-Wold \cite{BFW},
but we need a slightly more general form:
\begin{lemma}\label{lem:1}
Let $\alpha>-1$ and $D_n$ be a family of domains $D_n \subset \mathbb B$ such that $D_n\cap \{\re z_1\geq \alpha\}$ converges to $\mathbb B\cap \{\re z_1\geq \alpha\}$ in $\mathcal C^2$ topology. Let $z_n, w_n\in D_n$ converge to $p$ and $q$ respectively, where $p,q\in \partial \mathbb B \cap \{\re z_1 >\alpha\}$, $p\neq q$. Then there is $K\subset \subset \mathbb B$ such that any real geodesic $\sigma_n$ in $D_n$ for $z_n,$ $w_n$ intersects $K$.
\end{lemma}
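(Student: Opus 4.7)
The plan is to proceed by contradiction and reduce everything to the visibility of the ball $\BB$ itself. The key free input is the monotonicity of the Kobayashi metric under $D_n\subset \BB$: we have $k_{D_n}\ge k_\BB$ infinitesimally, and hence, for the $k_{D_n}$-geodesic $\sigma_n$,
\[
\ell_\BB(\sigma_n)\ \le\ \ell_{D_n}(\sigma_n)\ =\ k_{D_n}(z_n,w_n).
\]
The heart of the argument is to show that this quantity is at most $k_\BB(z_n,w_n)+C$ for some $C$ independent of $n$; this turns $\sigma_n$ into a uniform almost-geodesic in $\BB$, and visibility of the ball does the rest.

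Assume for contradiction that, after extraction, $\sigma_n$ leaves every compact subset of $\BB$. Fix an interior basepoint $x_0\in\BB\cap\{\re z_1>\alpha\}$; for $n$ large, $x_0\in D_n$. The $\mathcal C^2$-convergence of defining functions on $\{\re z_1\ge \alpha\}$ makes $\partial D_n$ uniformly strongly pseudoconvex (even uniformly strongly convex) in fixed neighborhoods of $p$ and $q$ for $n$ large, with all structural constants uniform in $n$. The Forstneri\v c--Rosay type localization of the Kobayashi distance at a strongly pseudoconvex boundary point then yields
\[
k_{D_n}(z_n,x_0)\ \le\ -\tfrac12\log \delta_\BB(z_n)+C,\qquad k_{D_n}(w_n,x_0)\ \le\ -\tfrac12\log \delta_\BB(w_n)+C.
\]
Since $p\ne q$, the Gromov product $(p|q)_{x_0}$ computed in $\BB$ is finite, which gives the matching lower bound
\[
k_\BB(z_n,w_n)\ \ge\ -\tfrac12\log \delta_\BB(z_n)-\tfrac12\log \delta_\BB(w_n)-C_{p,q}.
\]
Applying the triangle inequality in $D_n$ and comparing with the above, one obtains $k_{D_n}(z_n,w_n)\le k_\BB(z_n,w_n)+C'$, hence $\ell_\BB(\sigma_n)\le k_\BB(z_n,w_n)+C'$.

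Consequently $\sigma_n$ is a $(1,C')$-almost-geodesic for $k_\BB$. The ball is Gromov hyperbolic, so by the Morse/stability lemma $\sigma_n$ stays within a bounded $k_\BB$-distance of the genuine $\BB$-geodesic from $z_n$ to $w_n$, which is the intersection of $\BB$ with the affine complex line through $z_n,w_n$. As $z_n\to p$ and $w_n\to q$ with $p\ne q$, those affine lines converge to the line through $p,q$, and the associated $\BB$-geodesic arc has interior compactly contained in $\BB$. A bounded $k_\BB$-thickening of that arc is still a compact subset of $\BB$, and $\sigma_n$ must meet it for $n$ large, contradicting the choice of $\sigma_n$.

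The main obstacle is the uniformity in $n$ of the Forstneri\v c--Rosay localization constants. This is the only step where the $\mathcal C^2$-convergence is really used, and it is not a serious issue: a direct inspection of the standard proof (local peak plurisubharmonic functions and bumpings built from the Levi form) shows that all constants depend only on the $\mathcal C^2$-norm of the defining function, and are therefore uniform along the convergent family $\{D_n\}$.
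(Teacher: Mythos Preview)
Your argument is correct, but it follows a different path from the paper's. The paper gives a direct curvature estimate: the uniform exterior and interior ball conditions on $\partial D_n \cap \{\re z_1 \ge \alpha+\epsilon\}$ yield, respectively, a pointwise lower bound $\kappa_{D_n}(x;v)\ge C\|v\|/\sqrt{\delta_n(x)}$ on the infinitesimal metric and an upper bound $K_{D_n}(z,w)\le C+\tfrac12\log\tfrac{1}{\delta_n(z)}+\tfrac12\log\tfrac{1}{\delta_n(w)}$ on the distance; comparing the two shows that any curve staying in $\{\delta_n<\epsilon\}$ is too long to be a geodesic. You instead exploit the inclusion $D_n\subset\BB$: monotonicity gives $\ell_\BB(\sigma_n)\le k_{D_n}(z_n,w_n)$, your uniform upper bound on $k_{D_n}(z_n,w_n)$ together with the lower bound on $k_\BB(z_n,w_n)$ (bounded Gromov product at $p\neq q$) gives $\ell_\BB(\sigma_n)\le k_\BB(z_n,w_n)+C'$, whence $\sigma_n$, parametrized by $k_\BB$-arclength, is a $(1,C')$-quasi-geodesic in $\BB$, and the Morse stability lemma yields a fixed compact it must hit. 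Your route invokes more external machinery (Gromov hyperbolicity of $\BB$ and stability of quasi-geodesics) and relies specifically on the hypothesis $D_n\subset\BB$, while the paper's route is more elementary and would work just as well without that inclusion. Two minor remarks: the contradiction framing is superfluous, since your argument is in fact direct; and the upper bound $k_{D_n}(z_n,x_0)\le -\tfrac12\log\delta(z_n)+C$ already follows from a uniform interior ball (normal-disc) argument, so you do not need the full Forstneri\v c--Rosay localization, which also simplifies the uniformity discussion you flagged at the end.
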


If instead of a sequence of domains $D_n$ a fixed one is considered, the result was proven in \cite{BFW} (see also \cite{BZ}). A glimpse at their reasoning ensures us that estimates achieved there depend just on the normal curvature of the domain.

\begin{proof}

We can reparametrize $\sigma_n:[a_n,b_n]\to D_n$ by arc-length so that $\delta_n(\sigma_n (0))\leq \delta_n (\sigma_n(t))$, where $\delta_n(x)=\dist (x, \partial D_n)$. Since $D_n$ are uniformly bounded, $|\sigma_n'(t)|\leq L$ for some uniform constant $L$.

On the other hand, for $n$ big enough the normal curvature of $\partial D_n$ restricted to $\{\re z_1 \geq \alpha +\epsilon\}$, where $\epsilon$ is small enough, is close to the one of $\mathbb B$. In particular, all $D_n$ satisfy an exterior ball condition with a uniform constant. From this we deduce the existence of $C>0$ such that \begin{equation}
\label{eq:krbig}
    k_{D_n}(x, v)\geq C \frac{||v||}{\sqrt{\delta_n(x)}}
\end{equation}
for $x\in D_n\cap \{\re z_1 \geq \alpha + \epsilon\}$.

The same curvature-type argument shows that $\partial D_n$ restricted to $\{\re z_1 \geq \alpha +\epsilon\}$ satisfies a uniform interior ball condition. Therefore, there is a uniform $k\in \mathbb N$ so that any two points in $D_n$ can be connected with a chain of $k$ balls of a fixed radius. From this, we gain an estimate 
$$K_{D_n}(z,w)\leq C + \frac 12 \log \frac{1}{\delta_n(z)} + \frac12 \log \frac{1}{\delta_n(w)}, $$ with a uniform $C>0$. 

Now it is enough to notice that a curve that remains too close to the boundary at all times
has length bigger than the above estimate for the Kobayashi distance because of \eqref{eq:krbig}, or to
repeat an argument from \cite[Proof of Proposition 2.2]{BFW} or \cite[Section 5]{BZ}.
\end{proof}

\begin{lemma}\label{lem:claim}
Suppose that $D_{t_n}$ converges to $\mathbb B$ as in A1) and A2) when $t_n\to 1$. Let $z_n,w_n\in D_{t_n}$ be such that $z_n\to (1,0)$ and $w_n\to (-1,0)$. Let $\varphi_n$ be a real geodesic for $k_{D_n}$ joining $z_n$ and $w_n$, parametrized so that for some $\alpha_n,
\beta_n >0$, $\varphi_n(-\alpha_n)=w_n$, $\varphi_n(\beta_n)=z_n$,
and $\re \varphi_n(0)_1=0$.
Then inside any compact subset $K\subset \mathbb B^N$,
the geodesics $\varphi_n$ converge  in the $\mathcal C^1$ topology to a geodesic line in $\mathbb B$ between $(-1,0)$ and $(1,0)$, and for any $\epsilon, \delta_0>0$, there exists
$n(\epsilon,K,\delta_0)$ such that for any $z',w' \in K\cap \varphi_n$ 
with $|z'-w'|\ge \delta_0$, then for $n\ge n(\epsilon,K,\delta_0)$,
$l(\varphi_n|_{[z',w']})\le (1+\epsilon)|z'-w'|$.

The same holds if we change the assumption on $z_n$ to the following: $z_n\in (0,1)\times \{0\}^{N-1}$ (possibly replacing the geodesic line by a ray).
\end{lemma}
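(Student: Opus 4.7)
The plan is to realize each $\varphi_n$ as the real trace of a complex geodesic $\Phi_n\colon \mathbb D\to D_{t_n}$, extract a $\mathcal C^1$ limit via Proposition~\ref{prop:c1conv}, and identify that limit with the affine geodesic $\zeta\mapsto(\zeta,0)$ of $\mathbb B$. Concretely, I would first invoke the visibility Lemma~\ref{lem:1} for the sequence $(D_{t_n})$: assumption A1) together with the fact that $(\pm 1,0)\in\{\re z_1>\beta\}$ for $\beta$ close enough to $-1$ places us inside its hypothesis, and produces a compactum $K_0\subset\subset\mathbb B$ that each $\varphi_n$ must enter at some time $\tau_n$. Since $D_{t_n}$ inherits local strong convexity from the Fridman--Ma construction on the ball-like part of $\partial D_{t_n}$, the real geodesic $\varphi_n$ is contained in the range of a complex geodesic $\Phi_n$ of $D_{t_n}$ (cf.\ the argument in Section~3.1), and I normalize $\Phi_n$ by a M\"obius reparametrization so that $\Phi_n(0)=\varphi_n(\tau_n)\in K_0$.

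Proposition~\ref{prop:c1conv} then applies: A1) gives $\mathcal C^{2,\alpha}$-convergence of defining functions on $\{\re z_1>\beta\}$, and $\Phi_n(0)$ remains in the fixed compactum $K_0$, so a subsequence converges in $\mathcal C^1(\overline{\mathbb D})$ to a complex geodesic $\Phi$ of $\mathbb B$. The preimages $\zeta_n^\pm\in\overline{\mathbb D}$ of $z_n,w_n$ subconverge to points of $\partial\mathbb D$ whose images under $\Phi$ are $(1,0)$ and $(-1,0)$; since every complex geodesic of $\mathbb B$ is a restriction of a complex affine line, $\Phi$ is of the form $\zeta\mapsto(\zeta,0)$ up to a M\"obius reparametrization of $\mathbb D$. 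Its real trace is the straight segment $(-1,1)\times\{0\}^{N-1}$. A bounded shift in the arc-length parameter of $\varphi_n$ restores the normalization $\re\varphi_n(0)_1=0$ (such a shift converges, since $t\mapsto\re\varphi_n(t)_1$ tends locally uniformly to the strictly monotone $t\mapsto t$), and one obtains $\varphi_n\to\varphi$ in $\mathcal C^1$ on compacta of $\mathbb B$, with $\varphi(0)=0$.

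For the length bound, the $\mathcal C^1$-convergence gives $\varphi_n'\to e_1$ uniformly on any $K\subset\subset\mathbb B$. Writing $z'=\varphi_n(s'),\ w'=\varphi_n(t')\in K$ with $|z'-w'|\ge\delta_0$, the parameters $s',t'$ vary in a compact interval depending only on $K,\delta_0$; on this interval $|\varphi_n'-e_1|<\eta$ for $n$ large, so
\begin{equation*}
l(\varphi_n|_{[z',w']})=|t'-s'|\le (1-\eta)^{-1}|\varphi_n(t')-\varphi_n(s')|\le(1+\epsilon)|z'-w'|
\end{equation*}
for $n\ge n(\epsilon,K,\delta_0)$. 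For the ray variant $z_n\in(0,1)\times\{0\}^{N-1}$ the same scheme works: if $z_n\to(1,0)$ one recovers the line case, otherwise $\lim z_n\in[0,1)\times\{0\}^{N-1}$ is interior and $\Phi$ becomes the affine line through $\lim z_n$ and $(-1,0)$, inducing a real half-segment. The main obstacle will be the applicability of Proposition~\ref{prop:c1conv} in this setting: the $D_{t_n}$ are only strictly convex near their ball-like boundary, so one must verify that the $\Phi_n$ stay within that region. This containment follows from the fact that the boundary values of $\Phi_n$ cluster near $(\pm1,0)\in\partial\mathbb B$ together with the local convexity of $D_{t_n}$; a quantitative form is implicit in the constructions of Section~\ref{sec:FM}.
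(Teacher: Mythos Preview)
Your argument has a genuine gap at its first substantive step: you assert that each real geodesic $\varphi_n$ in $D_{t_n}$ is contained in the range of a complex geodesic $\Phi_n$. This is not justified. The domains $D_{t_n}=A_{t_n}^{-1}(D_0)$ are only strongly pseudoconvex; the convergence hypotheses A1)--A2) give $\mathcal C^{2,\alpha}$-closeness to $\mathbb B$ only on half-spaces $\{\re z_1>\beta\}$, while away from $(1,0)$ the boundary need not be convex at all. Lempert's theory---and in particular the identification of real Kobayashi geodesics with traces of complex geodesics---requires global strict convexity, so your invocation of ``the argument in Section~3.1'' is misplaced: that argument (Corollary~\ref{lem:uni}) applies only under the \emph{tangential} smallness condition on $|(z-w)_z|/|z-w|$, whereas Lemma~\ref{lem:claim} is precisely what the paper needs in the \emph{non}-tangential case, where no such reduction is available. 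For the same reason Proposition~\ref{prop:c1conv}, whose hypothesis is ``$D_n$ strictly convex'', cannot be applied to $D_{t_n}$ as stated. Your closing paragraph acknowledges the difficulty but the proposed fix is circular: to know that the boundary values of $\Phi_n$ cluster near $(\pm1,0)$ you would already need the control over $\varphi_n$ that the lemma is trying to establish, and in any case you have not produced $\Phi_n$ in the first place.

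The paper avoids complex geodesics entirely here. It works directly with the real geodesics: for each pair of nearby levels $\tilde\beta<\beta$ close to $1$, it takes crossing points of $\varphi_n$ with $\{\re z_1=\pm\beta\}$ and $\{\re z_1=\pm\tilde\beta\}$ and applies the visibility Lemma~\ref{lem:1} (which needs only the local $\mathcal C^2$ closeness in A1)) to trap $\varphi_n$ through fixed compacta $K_1\subset\{-\beta<\re z_1<-\tilde\beta\}$ and $K_2\subset\{\tilde\beta<\re z_1<\beta\}$. Equicontinuity (arc-length parametrization) and Arzel\`a--Ascoli then give a subsequential limit that is a real geodesic in $\mathbb B$ through $K_1$, $K_2$ and the origin; letting $\tilde\beta\to1$ forces this limit to be the segment $(-1,1)\times\{0\}$. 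The length inequality follows because the limiting geodesic in the ball is a straight line. This route uses only real-geodesic visibility and compactness, and does not require any part of Lempert's complex-geodesic machinery.
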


\begin{proof}
Take $\tilde \beta< \beta<1$ close to $1$. 
Take $s_n:=\max\{ t: \re \varphi_n(t)_1= - \beta\} $,
$\tilde s_n:=\min\{ t: \re \varphi_n(t)_1= -\tilde \beta\} $,
$\tilde t_n:=\max\{ t: \re \varphi_n(t)_1= \tilde \beta\} $,
$t_n:=\min\{ t: \re \varphi_n(t)_1=  \beta\} $,
and corresponding points 
$x_n=\varphi_n(s_n)$, $\tilde x_n=\varphi_n(\tilde s_n)$, 
$\tilde y_n=\varphi_n(\tilde t_n)$ and $y_n=\varphi_n(t_n)$. It follows from the visibility lemmas applied to $x_n, \tilde x_n$ and to $y_n, \tilde y_n$ that there are compact sets $K_1\subset \{-\beta< \re z_1 < - \tilde \beta\}$ and $K_2 \subset \{\tilde \beta < \re z_1 < \beta\}$ that intersect the ranges of the geodesics $\varphi_n$.

Since the $\varphi_n$ form an equicontinuous family, we get that a subsequence of $\varphi_n$ converges to a real geodesic
$\varphi$ in the ball that intersects $K_1$ and $K_2$
and passes through $0$. 
Letting $\tilde\beta$ tend to $1$, we get the  convergence on all
compact sets. The statement about lengths comes from the fact that the 
real geodesic for the ball between $z'$ and $w'$ is close to a line.

The second assertion can be deduced in a similar way.
\end{proof}

\subsection{Proof by contradiction: Claims}

Suppose that the estimate in Theorem \ref{thm:main} fails. 
Then there are sequences of points $z_n,w_n\in D$,
joined by a geodesic $\gamma_n = \gamma_{z_n, w_n}$ 
such that 
\begin{equation}
\label{toolong}
l(\gamma_n) \geq a_n |z_n - w_n|, \mbox{ where }a_n\to \infty.
\end{equation}

Our goal is to obtain Claim 1, and then to prove Claim 2 which
contradicts the previous one.
 
Since the ratio of the Euclidean and Kobayashi-Royden infinitesimal metrics are bounded
above and below on any compactum, it is enough to consider the case at least one of the points $z_n,w_n$
escapes from any compact set, so passing to a subsequence
and exchanging $z_n$ and $w_n$ if needed, we may
assume $z_n\to p\in \partial D$. 

Then we may reduce ourselves to the case where $w_n$
converges to the same boundary point. Indeed, if the distance between $z_n$ and $w_n$ did not converge to 0, then 
by \eqref{toolong} the lengths of the curves
$\gamma_n$ would go to infinity and switching $z_n$ and $w_n$, if necessary, we could pick points $w'_n \in \gamma_n$, 
such that $|w'_n-z_n|\to 0$ while $l(\gamma_n|_{[w_n',z_n]})$ remains big.

\bigskip

{\bf Claim 0.} It is enough to focus on the case when $|\left(\gamma_n(t) - \gamma_n(s)\right)_{\gamma_n(t)}|\geq C |\gamma_n(t) - \gamma_n(s)|$, 
for all $s, t$, for some uniform constant $C$, where normal length is taken with respect to $p\in \partial D$. 

\begin{proof}[Proof of the claim] Actually, if the opposite inequality were true for some $t,s$, where $C$ is small enough, we would deduce that $\gamma_n$ is contained in a complex geodesic, according to Corollary~\ref{lem:uni}. From Lemma~\ref{lem:eps} we would get that this is actually covered by the tangential case. 
\end{proof}

It follows from Claim 0 that it suffices to measure the length of $\gamma_n$ projected to the normal direction. Denote this {\it normal} length by $l_N$.

\bigskip

{\bf Claim 1}. There exist a sequence of domains $D_n$, points $\zeta_n$, $\nu_n\in D_n$ and real geodesics $\eta_n$ in $D_n$ joining them such that $\zeta_n=\eta_n(t_n)$, $\nu_n=\eta_n(-s_n)$, and
\begin{enumerate}
\item For any $\beta>-1$, the domains $D_n$ converge to $\mathbb B$ in $\mathcal C^{2,\alpha}$ topology when restricted to $\{\re z_1\geq \beta\}$;
\item  $(1,0)\in \partial D_n$ and the defining functions of $D_n$ near $(1,0)$ are of the form \eqref{eq:D};
\item $\zeta_n\in (0,1)\times \{0\}^{N-1}$, $\zeta_n\to (1,0)$, and $\nu_n\to (-1,0)$, and setting $\xi_n=\eta_n(0)$,
$\re (\xi_n)_1=0$, $\xi_n\to (0, x)$ as $n\to \infty$;
\item $l_N(\eta_n|_{[0,t_n]})\to \infty$, as $n\to \infty$.
\end{enumerate}

\bigskip

We shall prove Claim 1 in several steps. 

The following simple observation will be used several times, so let us state it as a separate result.
Recall that $m_t$ is defined in \eqref{eq:aut}.

\begin{lemma}\label{lem:mt} 1) Let $\delta:I\to \DD$ be a curve. Let $t\in (0,1)$  be such that $\tilde \delta:=m_t^{-1}\circ \delta $ is contained in $\{\re \lambda \geq \beta\}$, $\beta>-1$. Then 
$$| \delta(s_1) - \delta(s_2)|\simeq (1-t^2) |\tilde \delta (s_1) - \tilde \delta(s_2)|,\quad s_1,s_2\in I.$$ 
Consequently, 
$$l(\delta)\simeq (1-t^2) l(\tilde \delta).$$ 
The estimates above depend only on $\beta$.

2) Suppose additionally
\begin{equation}\label{eq:s}
\left| \tilde\delta(s_2) - \tilde\delta(s_1)\right| \le 
l(\tilde \delta |_{[s_1,s_2]} )\leq (1+ \epsilon) \re(\tilde\delta(s_2) - \tilde\delta(s_1)),
\end{equation} 
where $s_1, s_2\in I$.
Assume also that $|\im \tilde \delta(s)|<\epsilon$, $s\in I$,  where $\epsilon>0$ is small enough. Then $$l(\delta|_{[s_1,s_2]}) \leq  C_{\beta} (1+ \epsilon) \re (\delta(s_2) - \delta(s_1)).$$ 
\end{lemma}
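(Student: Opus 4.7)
The plan is to exploit the explicit Möbius form of $m_t$. By direct algebra,
\begin{equation*}
m_t(\lambda_2)-m_t(\lambda_1)=\frac{(1-t^2)(\lambda_2-\lambda_1)}{(1+t\lambda_1)(1+t\lambda_2)},
\qquad m_t'(\lambda)=\frac{1-t^2}{(1+t\lambda)^2}.
\end{equation*}
The key elementary observation is that on $\{\lambda\in\DD:\re\lambda\geq\beta\}$ one has
$|1+t\lambda|\leq 1+t\leq 2$ and $|1+t\lambda|\geq\re(1+t\lambda)=1+t\re\lambda\geq\min(1,1+\beta)>0$, uniformly in $t\in(0,1)$. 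Thus $|1+t\lambda|\simeq_\beta 1$ throughout.

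Part 1) then follows immediately: writing $\delta(s_i)=m_t(\tilde\delta(s_i))$ the identity above gives
$|\delta(s_2)-\delta(s_1)|\simeq_\beta (1-t^2)|\tilde\delta(s_2)-\tilde\delta(s_1)|$, and integrating $|\delta'(s)|=|m_t'(\tilde\delta(s))|\,|\tilde\delta'(s)|\simeq_\beta (1-t^2)|\tilde\delta'(s)|$ yields the length comparison. I would note that the lower bound $|1+t\lambda|\geq\min(1,1+\beta)$ is precisely where the requirement $\beta>-1$ enters.

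For part 2), combining 1) with the hypothesis gives
\begin{equation*}
l(\delta|_{[s_1,s_2]})\lesssim_\beta (1-t^2)\,l(\tilde\delta|_{[s_1,s_2]})\leq (1-t^2)(1+\eps)\re(\tilde\delta(s_2)-\tilde\delta(s_1)).
\end{equation*}
What remains, therefore, is to establish the reverse comparison
$(1-t^2)\re(\tilde\delta_2-\tilde\delta_1)\lesssim_\beta \re(\delta_2-\delta_1)$. Using the identity,
\begin{equation*}
\re(\delta_2-\delta_1)=\frac{(1-t^2)\,\re\!\bigl[(\tilde\delta_2-\tilde\delta_1)\,\overline{(1+t\tilde\delta_1)(1+t\tilde\delta_2)}\bigr]}{|(1+t\tilde\delta_1)(1+t\tilde\delta_2)|^2},
\end{equation*}
whose denominator is $\simeq_\beta 1$. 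Writing $(1+t\tilde\delta_1)(1+t\tilde\delta_2)=P+iQ$, one checks that $|Q|=O(\eps)$ (because $|\im\tilde\delta(s)|<\eps$) and $P\simeq_\beta 1$, so the numerator equals $P\,\re(\tilde\delta_2-\tilde\delta_1)+Q\,\im(\tilde\delta_2-\tilde\delta_1)$. The ``almost straight'' hypothesis, in the form $|\tilde\delta_2-\tilde\delta_1|\leq(1+\eps)\re(\tilde\delta_2-\tilde\delta_1)$, forces $|\im(\tilde\delta_2-\tilde\delta_1)|=O(\sqrt\eps)\,\re(\tilde\delta_2-\tilde\delta_1)$, so the $Q$-term is absorbed by the $P$-term for $\eps$ small, yielding $\re(\delta_2-\delta_1)\simeq_\beta (1-t^2)\re(\tilde\delta_2-\tilde\delta_1)$. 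Chaining the inequalities gives the claimed bound with constant $C_\beta(1+\eps)$.

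The only delicate point is this last paragraph: one must correctly track that the error from the rotation coming from $Q$ is $O(\eps\cdot\sqrt\eps)=o(1)$ compared with the main term $P\,\re(\tilde\delta_2-\tilde\delta_1)$, which is where both the smallness assumption $|\im\tilde\delta|<\eps$ and the near-straightness of $\tilde\delta$ are essential and combine multiplicatively; the rest of the proof is just substitution into the explicit formula for $m_t$.
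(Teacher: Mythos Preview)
Your proof is correct and follows essentially the same route as the paper's: both parts rest on the explicit identity $m_t(\lambda_2)-m_t(\lambda_1)=\dfrac{(1-t^2)(\lambda_2-\lambda_1)}{(1+t\lambda_1)(1+t\lambda_2)}$ together with the two-sided bound $|1+t\lambda|\simeq_\beta 1$ on $\{\re\lambda\ge\beta\}$, and part~2 is then reduced to showing $\re(\delta_2-\delta_1)\gtrsim_\beta (1-t^2)\re(\tilde\delta_2-\tilde\delta_1)$ by controlling the imaginary contributions. The only cosmetic difference is that the paper phrases this last step in terms of the \emph{arguments} of numerator and denominator being $O(\eps)$-small, whereas you decompose $(1+t\tilde\delta_1)(1+t\tilde\delta_2)=P+iQ$; your $O(\sqrt\eps)$ bound on $|\im(\tilde\delta_2-\tilde\delta_1)|/\re(\tilde\delta_2-\tilde\delta_1)$ is in fact the sharp one (the paper's claimed $O(\eps)$ for the argument of the numerator is a slight overstatement, though harmless for the conclusion).
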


\begin{proof}
If $\re x_1, \re y_1\geq -\beta$, then $|1+t x_1|$ and $|1+t y_1|$ are between $1-\beta$ and 2, so by direct computations $|m_t(x_1) -m_t(y_1)|\simeq  (1-t^2)|y_1-x_1|$.

The second part is technical, as well. The first part
and assumption \eqref{eq:s} imply that $l(\delta|_{[s_1, s_2]}) \leq c_\beta (1+\epsilon) (1-t^2) \re (\tilde\delta (s_2) - \tilde \delta (s_1)).$

To get the assertion, compute
\[ 
\re (\delta(s_2) - \delta(s_1)) = (1-t^2) 
\re \left(     
\frac{\tilde\delta(s_2) - \tilde\delta(s_1)}{(1+t\tilde\delta(s_1)) (1+t\tilde\delta(s_2))} \right)
\]
Since $|\im \tilde \delta(s)|<\epsilon$, the argument of the 
denominator is bounded by $C\epsilon$, and by \eqref{eq:s}, so is
the argument of the numerator. So we get
\[
\re (\delta(s_2) - \delta(s_1)) \geq
(1-t^2)(1-C\epsilon)\re \left( \tilde\delta(s_2) - \tilde\delta(s_1)\right),
\]
which finishes the proof.
\end{proof}

Let us come back to the proof of Claim 1. Let $p_n\in \partial D$ minimize the distance from $z_n$ to $\partial D$. Clearly $p_n\to p$.
Using a Fridman-Ma transformation, we can assume that $p_n=(1,0)$ and that $\partial D$ is near $(1,0)$ of the form \eqref{eq:D}. Let $A_t$ be given by \eqref{eq:aut}. 

Choose $t_n$  such that $A^{-1}_{t_n}(\gamma_n) \subset \{\re z_1\ge 0\}$ and 
$A^{-1}_{t_n}(\gamma_n) \cap \{\re z_1=0\} \neq \emptyset$.

\bigskip

{Step 1}. $t_n\to 1$ as $n\to\infty$. 

\begin{proof} 
This comes from the fact that a 
Gromov hyperbolic metric space $X$ (with boundary) does not have geodesic loops, i.e. isometric maps $\gamma$ from $\mathbb R$ 
such that $\lim_{t\to\pm \infty} \gamma(t)=p \in \partial_G X$, where $\partial_G X$ is the Gromov 
boundary of $X$ and the convergence is in the sense defined for that boundary. Recall that a bounded strongly 
pseudoconvex domain in $\mathbb C^n$ endowed with the Kobayashi distance is Gromov hyperbolic and
its Gromov boundary coincides with the Euclidean boundary \cite{BB}.

Suppose that some subsequence, again denoted by $t_n$, remains
bounded away from $1$. We may restrict attention to a neighborhood of
 $p$ and thus assume it
 is  a point of global strong convexity. The geodesics $(\gamma_n)$ pass through $z_n$ and points on the surface $A_{t_n}\left(\{\re z_1=0\}\right)$, which are far away from each other. Thus the visibility applies and allows us to choose $K \subset \subset \mathbb B$ such that every $\gamma_n$ intersects it. 

Choose a sequence $s_n\to 1$ slowly enough so that $A_{s_n}^{-1}(z_n)$ and $A_{s_n}^{-1}(w_n)$ still tend  to $p$,
so the starting and
ending points of $A_{s_n}^{-1}(\gamma_n)$ are both close to $(1,0)$. But $A_{s_n}^{-1}(K)$ converges to the point $(-1,0)$. Whence $A_{s_n}^{-1}(\gamma_n)$ is convergent to a nontrivial geodesic line in $\mathbb B$ with both extremities tending to the same boundary point, a contradiction with the 
non existence of geodesic loops.
\end{proof}

Let $\tilde z_n = A_{t_n}^{-1}(z_n)$,  $\tilde w_n=A_{t_n}^{-1}(w_n)$, and $\tilde \gamma_n := A^{-1}_{t_n} (\gamma_n)$.

\bigskip

{Step 2}. The following uniform estimates hold:
$$l_N(\gamma_n)\simeq (1-t_n^2) l_N(\tilde \gamma_n),
$$ and $$|z_n - w_n|_{z_n} \simeq (1-t_n^2) |\tilde z_n - \tilde w_n|$$ as $n \to \infty$. Consequently, 
\begin{equation}
\label{toolong2}
    l( \tilde \gamma_n)/|\tilde z_n - \tilde w_n |\simeq l (\gamma_n)/|z_n - w_n|\geq a_n.
\end{equation}

\begin{proof}
This is a direct consequence of Lemma~\ref{lem:mt}.
\end{proof}

{Step 3}. There is no $K\subset \subset \mathbb B$ such that both $\tilde z_n$ and $\tilde w_n$ lie in $K$ for any $n\in \mathbb N$ large enough.

\begin{proof}
If there was, there would be a constant $C_K>1$ such that
$C_K^{-1} k_{D_n}(z;v) \le \|v\| \le C_K k_{D_n}(z;v)$ for $z\in K$,
and therefore $C'_K>1$ such that
$(C'_K)^{-1} |\tilde z_n - \tilde w_n | \le  l( \tilde \gamma_n)
\le C'_K |\tilde z_n - \tilde w_n |$, which contradicts 
\eqref{toolong2}.
\end{proof}

{Step 4}. $\liminf_{n\to \infty}|\tilde z_n- \tilde w_n|>0$.

\begin{proof} If this statement were false, then (possibly passing to a subsequence) $\tilde z_n,\tilde w_n$ would converge to $(1,0)$, according to Step 3 and the fact that $\tilde z_n\in (0,1)\times \{0\}^{N-1}$. Since the geodesics $\tilde \gamma_n$ touch $\{\re z_1=0\}$ (which is far from $(1,0)$), the argument from Step 1 provides us with a contradiction.
\end{proof}

{Step 5}. $l_N(\tilde \gamma_n)\to \infty$ as $n\to \infty$. 

\begin{proof}
A direct consequence of Steps 2 and 4.
\end{proof}

\begin{proof}[End of proof of Claim 1]
From Step 4 and Lemma~\ref{lem:1} we deduce that $\tilde \gamma_n$ intersects a fixed compact subset $K_0$ of $\mathbb B$. For each $n$, choose  $x_n \in K_0 \cap \tilde \gamma_n$. Step 2 ensures that the lengths of curves from $x_n$ to $\tilde z_n$ or from $x_n$ to $\tilde w_n$ converge to infinity. If the first possibility holds, then by the arguments from Step 3 $\tilde z_n$ goes to the boundary, and we put $\zeta'_n=\tilde z_n$ and $\nu'_n=x_n$. 

If the second one holds, we apply Fridman-Ma construction with respect to points $q_n\in \partial D_n$ from Lemma~\ref{rem}. It follows  from the Fridman-Ma construction that it preserves the fact that
lengths go to infinity or not. Then define $\zeta'_n$ to be (the new) $\tilde w_n$, and $\nu'_n$ to be (the new) $x_n$. 

Now we need to redefine $\zeta'_n$  and $\nu'_n$ a bit so that $\nu_n$ escapes to $(-1,0)$. It will be straightforward: the $\nu'_n$ remain within a fixed compact set that will be pushed towards $(-1,0)$ under appropriate 
automorphisms of the ball.

We set $\eta_n:=A_{s_n}^{-1}(\tilde \gamma_n)$, $\zeta_n:=A_{s_n}^{-1}(\zeta'_n)$, 
and $\nu_n:=A_{s_n}^{-1}(\nu'_n)$ where $(s_n)_n\subset (0,1)$ is to be chosen
increasing to $1$, so that $\nu_n$ tends to $(-1,0)$. Now let $\xi_n$ be a point on $\eta_n \cap \{\re z_1 =0\}$. We can pick $(s_n)_n$ converging to $1$ slowly enough so that the lengths of the curves $\eta_n$ from $\xi_n$ to $\zeta_n$ still tend to infinity.

The assertion on normal lengths is clear (compare Claim 0 and Step 2). 
\end{proof}

\medskip

{\bf Claim 2.} Let $D_n$, $\eta_n$ be as in Claim 1 and satisfy properties (1), (2), and (3). Then 
\begin{equation}\label{eq:o1} \frac{l_N(\eta_n|_{[0,t_n]})}{| \eta_n(0)- \eta_n(t_n)|_{\eta_n(0)}} =O(1),
\end{equation}
as $n\to \infty$.

\medskip
\begin{proof}[Proof of Claim 2]

Let $A^{-k} := A_{1/2}^{-1} \circ \cdots \circ A_{1/2}^{-1}$ and $m^{-k} := m_{1/2}^{-1} \circ \cdots \circ m_{1/2}^{-1},$ where the automorphism $A_{1/2}$ and M\"obius map are defined in \eqref{eq:aut}.


First note that composing with $A^{-k}$ makes domains $D_n$ closer to $\mathbb B$ when $k\to \infty$. For the simplicity of notation write $\delta= \eta_n^1$ (the first complex coordinate of $\eta_n$). 

Fix $\epsilon>0$. Since $\zeta_n\in (0,1)\times \{0\}^{N-1}$, 
we have $m^{-k}((\zeta_n)_1)>0$ for $k\le k(n)$, and
since $ m^{-k}(\nu_n)$ is closer to $(-1,0)$ than $\nu_n$ for $k\ge0$,
it follows from the second assertion of
Lemma~\ref{lem:claim} applied to $m^{-k}(\eta_n) $
that if $n$ is big enough, then $|\im(m^{-k}(\delta(s)))|<\epsilon$ for $s$ and $k\geq 0$ such that $\re(m^{-k}(\delta(s)))=0$.
Fix $-1<\beta_1<\beta_2<0$ so that $m^{-1}(i [-\epsilon,\epsilon])\subset \{\beta_1<\re z_1<\beta_2\}.$ 

By the Denjoy-Wolff theorem $m^{-k}((\zeta_n)_1)\to -1$ as $k\to\infty$. Therefore we can pick a finite number of points $0=t_0<s_1< \ldots< s_k<t_n \leq s_{k+1}$ that satisfy the following equalities $\re m^{-1}(\delta(s_1))=0$, $\re m^{-2}(\delta(s_2))=0, \ldots , \re m^{-(k+1)}(\delta(s_{k+1}))=0$. 

It follows from our construction that $\beta_1<\re(m^{-(j+1)}(\delta(s_j)))<\beta_2$ while $\re(m^{-(j+1)}(s_{j+1}))=0$.
by Lemma~\ref{lem:claim},  we have a bound on the imaginary part of $m^{-(j+1)}\circ \delta$ on $[s_j, s_{j+1}]$. So \eqref{eq:s} is satisfied, and  Lemma~\ref{lem:mt} applied to $\tilde\delta|_{[s_1, s_2]}:= m^{-(j+1)}\circ \delta|_{[s_j, s_{j+1}]}$ and $m^{-1}_t:=m^{-(j+1)}$ provides us with the  estimate $$l(\delta_{[s_{j}, s_{j+1}]})\leq C(1 + \epsilon) \re(\delta(s_{j+1}) - \delta(s_j)),$$ where the constant $C$ is uniform. From this, we trivially get the assertion.
\end{proof}

\begin{proof}[Proof of Theorem~\ref{thm:main}]
Claims 1 and 2 contradict each other.
\end{proof}

{\bf Acknowledgments.} The authors wish to thank the anonymous referees for their multiple observations which greatly improved the exposition of this paper.


\begin{thebibliography}{}
		
\bibitem{BB} Z.M. Balogh, M. Bonk, {\it Gromov hyperbolicity and the Kobayashi metric on strictly pseudoconvex domains}, Comment. Math. Helv. 75 (2000), 504--533.

 \bibitem{BZ} G. Bharali, A. Zimmer, {\it Goldilocks domains, a weak notion of visibility, and applications},
Adv. Math. 310 (2017), 377--425.

  \bibitem{BFW} F. Bracci, J.E. Fornaess, E.F. Wold,  {\it Comparison of invariant metrics and distances on strongly pseudoconvex domains and worm domains}, Math. Z. 292 (2019), no. 3-4, 879--893.

  \bibitem{BST} F. Bracci, A. Saracco, S. Trapani,  {\it The pluricomplex Poisson kernel for strongly pseudoconvex domains},
Adv. Math. 380 (2021), paper No. 107577, 39 pp.

\bibitem{CCS} E.M. Chirka, B. Coupet, A.B. Sukhov, {\it On boundary regularity of analytic discs},
Michigan Math. J. 46(2), 271--279.

\bibitem{DGZ} F. Deng, Q. Guan, L. Zhang, {\it Properties of squeezing functions and global transformations of bounded domains}, Trans. Amer. Math. Soc. 368 (2016), no. 4, 2679–-2696. 
  		
   \bibitem{DFW} K. Diederich, J.E. Fornaess, E.F. Wold, {\it Exposing points on the boundary of a strictly pseudoconvex or a locally convexifiable domain of finite 1-type}, J. Geom. Anal. 24 (2014), no. 4, 2124--2134.

 \bibitem{FM} B. L. Fridman, D. Ma, {\it On exhaustion of domains}, Indiana Univ. Math. J. 44 (1995), no. 2, 385--395.

  \bibitem{GH} F.W.  Gehring, W.K. Hayman, 
{\it An inequality in the theory of conformal mapping},
J. Math. Pures Appl.  41 (1962), 353--361. 


\bibitem{H3} X. Huang, {\it Revisiting a non-degeneracy property for extremal mappings}, Acta Math. Sci. 41B (2021), 1829--1838.
  
\bibitem{Kos} \L. Kosi\'nski, {\it Comparison of invariant functions and metrics}, Arch. Math. (Basel) 102 (2014), 271--281.

\bibitem{KN} \L. Kosi\'nski, N. Nikolov, {\it  Lower estimates of the Kobayashi distance and limits of complex geodesics}, 2023, Mathematische Annalen, to appear, arXiv:2208.13239.

\bibitem{KW} \L. Kosi\'nski, T. Warszawski, {\it Lempert theorem for strongly linearly convex domains}, Annales Polonici Mathematici  107 (2013), no. 2, 167--216.

\bibitem{KZ}
\L. Kosi\'nski, W. Zwonek,
{\it Nevanlinna-Pick problem and uniqueness of left inverses in convex domains, symmetrized bidisc and tetrablock},
J. Geom. Anal. 26 (2016), no. 3, 1863--1890. 
		
\bibitem{L1} L. Lempert, {\it La m\'etrique de Kobayashi et la repr\'esentation des domains sur la boule}, Bull. Soc. Math. France 109 (1981), 427--474.
		
\bibitem{L2} L. Lempert, {\it Intrinsic distances and holomorphic retracts}, Complex analysis and applications '81 (Varna, 1981), 341--364, Publ. House Bulgar. Acad. Sci., Sofia, 1984. 

\bibitem{L3} L. Lempert, {\it A precise result on the boundary regularity of biholomorphic mappings}, Math. Z. 193 (1986), 559--579.

\bibitem{L4} L. Lempert, {\it Erratum: "A precise result on the boundary regularity of biholomorphic mappings'' [Math. Z. 193 (1986), no. 4, 559–579; MR0867348].} Math. Z. 206 (1991), no. 3, 501--504.

 \bibitem{LPW}
 J. Liu, X. Pu, H. Wang, 
 {\it Bi-H\"older extensions of quasi-isometries on pseudoconvex domains of finite type in 
 $\mathbb C^2$}, arXiv:2301.06411, 2023.

 \bibitem{LWZ}
 J. Liu, X. Pu, H. Wang, Q. Zhou, {\it Bi-H\"older Extensions of Quasi-isometries on Complex Domains}, J. Geom. Anal. 32, article no. 38 (2022). 

\bibitem{NO}  N. Nikolov, A.Y. \"Okten, {\it Strongly Goldilocks domains, quantitative visibility, and applications}, arXiv:2206.08344, 2022.
  
\bibitem{NT}
N. Nikolov, P.J. Thomas, {\it Quantitative localization and comparison of invariant distances of domains in $\mathbb C^n$}, J. Geom. Anal. 33, article no. 35 (2023). 
https://doi.org/10.1007/s12220-022-01086-9

\bibitem{PS} 
  C. Pommerenke, S. Rohde, 
{\it The Gehring-Hayman inequality in conformal mapping}, Quasiconformal mappings and analysis (Ann Arbor, MI, 1995), 309--319, Springer, New York, 1998. 

\end{thebibliography}
\end{document}